\newtheorem{theorem}{\noindent Theorem}
\newtheorem{lemma}{\noindent Lemma}
\newtheorem{definition}{\noindent Definition}
\newtheorem{corollary}{\noindent Corollary}
\begin{document}
\begin{center}{}{\bf INVARIANT MEASURES FOR
THE CONTINUAL CARTAN SUBGROUP}

\bigskip

{}\bf\textbf{A.~M.~VERSHIK}
\end{center}
\bigskip
\rightline{\it To Professor Malliavin with deep respect}
\bigskip

\begin{abstract}
We construct and study the one-parameter semigroup of
$\sigma$-finite measures ${\cal L}^{\theta}$, $\theta>0$, on the
space of Schwartz distributions that have an infinite-dimensional
abelian group of linear symmetries; this group is a continual analog
of the classical Cartan subgroup of diagonal positive matrices of
the group $SL(n,R)$. The parameter $\theta$ is the degree of
homogeneity with respect to homotheties of the space, we prove
uniqueness theorem for measures with given degree of homogeneity,
and call the measure with degree of homogeneity equal to one the
infinite-dimensional Lebesgue measure
 $\cal L$. The structure of these measures is very closely related
 to the so-called Poisson--Dirichlet measures $PD(\theta)$, and to the well-known
 gamma process. The nontrivial properties of the Lebesgue measure
 are related to the superstructure of the measure $PD(1)$, which is
 called the conic Poisson--Dirichlet measure -- $CPD$. This is the most
 interesting $\sigma$-finite measure on the set of positive convergent
 monotonic real series.
\end{abstract}

\tableofcontents

\section{Introduction} The Lebesgue measure on the cone of positive
vectors of ${\mathbb R}^n $ can be described as the unique (up to a
multiplicative constant) $\sigma$-finite measure that is invariant
with respect to the group $SDiag_+(n,\mathbb R)$  (the positive part of
the Cartan subgroup) and homogeneous of
degree $n$ with respect to homotheties.
What is the infinite-dimensional analog of this fact?
First of all, what is the continual analog of the Cartan subgroup?
And does there exist,  in an infinite-dimensional space, a
measure that is invariant with respect to the Cartan subgroup and has
a finite degree of homogeneity with respect to homotheties?

The goal of this paper is to introduce a family of $\sigma$-finite
measures in the space of Schwartz distributions on the interval (or
on a manifold) that is invariant with respect to a continual group
of multiplicators and has a finite degree of homogeneity with
respect to homotheties. The measure with degree of homogeneity equal
to one is called the {\it infinite-dimensional Lebesgue measure}.

More exactly, we consider $\sigma$-finite Borel measures, in the
space of Schwartz distributions on a manifold $X$ with measure $m$,
that are invariant under the action of the group ${\cal M}_0$ of
operators of multiplication by positive test functions $a(\cdot)$
satisfying the condition $$\int_X \ln a(x)dx=0$$. This group is a
direct continual analog of the positive Cartan subgroup $SDiag_+(n,
\mathbb R)$ of the group $SL(n,\mathbb R)$. The above condition is
an analog of the condition $\det A=1$ for matrices.

It turns out that this invariance implies another invariance: these
measures are invariant under the action of the group of
measure-preserving transformations of the manifold (this is an
analog of the invariance of the ordinary Lebesgue measure under the
action of the symmetric group of permutations of coordinates); this
group plays role of Weyl group in $SL(n,\mathbb R)$.

In fact, we define a remarkable one-parameter family of
$\sigma$-finite measures ${\cal L}^{\theta}$,
$\theta>0 $, enjoying this property, and prove a uniqueness theorem which
says that ${\cal L}^{\theta}$ is the unique ergodic, positive, $\sigma$-finite
measure on the cone of positive Schwartz distributions that is
finite on compact subsets, satisfies
this invariance property, and has a fixed degree of homogeneity.

The  measures  ${\cal L}^{\theta}$  are closely related to
the so-called Poisson--Dirichlet measures $PD(\theta)$ on the
infinite-dimensional simplex and, consequently, to many combinatorial
problems (see \cite{V}).

One of the main applications of our measures is in the
representation theory of current groups. The group of symmetries of
the Lebesgue measure $\cal L$ (the Cartan subgroup) allows us to use
the space $L^2({\cal L})$ for constructing a realization of
representations of the current group $G^X$, where $G$ is a
semisimple Lie group of rank $1$. This measure was discovered in a
series of papers by (\cite{VGG1,VGG,GGV}), and later
(\cite{VG,VG1}), and applied for constructing a new model of these
representations which, instead of the Fock space with
infinite-dimensional range of the one-particle subspace, uses
$L^2(\cal L)$. More generally, this gives a  {\it new look on the
construction of the continuous tensor product of Hilbert spaces}. We
can say that our construction is a contribution to the continuing
discussion of what is integration in infinite-dimensional spaces.

Another aspect, which is discussed in the papers \cite{VY, TVY},
is a deep
parallelism between the Gaussian measure (white noise) and our
Lebesgue measure. These two processes, the first one being a generalized
stochastic process and the second one being a quasi-stochastic process
(the term ``quasi-stochastic'' means that instead of a
probability measure on the space of the process we use a
$\sigma$-finite measure)
correspond to the two endpoints of the interval
$[0,2]$ which is the set of parameters of stable L\'evy processes (or stable
distributions on the real line). The point $2$ of the interval
$(0,2]$
corresponds to the white noise (or to the Wiener process, if one
prefers to consider a process with independent increments instead of
a generalized process with independent values); the point $0$, as the
parameter of a stable distribution, has no direct sense: {\it at
this point, we must consider the derivative of the characteristic
functional}, and this gives our Lebesgue measure (see \cite{TVY} for the
so-called notion
of zero-stable law and zero-stable distribution). The
Gaussian measure (white noise) corresponding to the point $2$ is invariant under the
action of the infinite-dimensional orthogonal group; at the point
$0$, we have the invariance under the infinite-dimensional (abelian)
linear group of multiplicators. The interval $(0,2)$ can be
regarded as a deformation of one measure into the other one with unknown
nonlinear group of symmetries.

This paper is closely related to our previous papers \cite{V,V1}
and devoted to the same subject. In this paper, we put an end
to the discussion concerning
approximation of these measures
with finite-dimensional invariant measures on the hyperspheres, which was
started in \cite{V} (see also \cite{V1,VG2}). In that paper we
insisted on an analogy between the situation with approximation of
the infinite-dimensional Gaussian measure and the infinite-dimensional
Lebesgue measure. In this paper we explain why there is such a drastic
difference between the case of the white noise (the Maxwell--Poincar\'e lemma
about the limit of measures on Euclidean spheres) and our case with
noncompact homogeneous spaces (hyperspheres). The key difference is
the difference between the $\sigma$-finiteness and finiteness of the
orbital measures, or, shortly, between the noncompactness and compactness
of the group of finite-dimensional symmetries (the Cartan and
orthogonal groups). In a more physical language, this means the absence
of equivalence between the grand and small canonical ensembles in the
Lebesgue case, in contrast to the equivalence of both ensembles in
the Gaussian case, see Section~4.

We want to emphasize the similarity between the construction of the
measures ${\cal L}^\theta$ and
the classical method of constructing stochastic
L\'evy processes as Gelfand--It\^o processes. It is based on
infinitely divisible measures and the corresponding characteristic
functionals (in the theory of stochastic processes, there is an
equivalent language of ``random measures'').  Unfortunately, there is
no (at least to the author's knowledge) general theory of
``$\sigma$-finite L\'evy processes'' and no L\'evy--Khintchin formula for the
Fourier (or Laplace) transform of infinitely divisible
$\sigma$-finite measures on the real line and on ${\Bbb R}^n$.
Undoubtedly, the needed definition must be very similar to the
classical one, and they must be unified in a general theory of finite
and $\sigma$-finite L\'evy processes. But since there is no such a theory,
we use special considerations in our construction, see Section~2.

Let us briefly mention the history of what we called
the infinite-dimensional Lebesgue measure. It seems that this measure first
appeared in the papers \cite{VGG,GGV}, as a measure whose
characteristic functional is the restriction of the canonical state on the
group $SL(2, \Bbb R)$ to the unipotent subgroup. Later it became
clear that there is a link between the gamma process and the
Lebesgue measure: the latter is absolutely continuous with respect
to the law of the gamma process, see \cite{TVY} and references therein.
This link was discussed in connection with the Markov--Krein
transformation \cite{TV}, stable laws \cite{V}, isomorphisms with the
Fock space \cite{VTs}. The connections between the representation theory of
current groups and the Lebesgue measure was recently considered in
the joint paper of the author with M.~I.~Graev \cite{VG}.

Here are brief comments on the contents of the paper. In the
second section we recall the definitions and basic properties of
some important measures and processes
(Dirichlet, gamma) and introduce the measures ${\cal
L}^{\theta}$, first as generalized processes or random measures.
We use the link between these measures and the gamma processes. It is
possible to generalize the classical theory of
extension of cylinder measures in linear spaces, including the
definition of Gelfand--It\^o generalized processes,
to the $\sigma$-finite case. The reference to
the properties of the gamma processes immediately allows us to establish
that the support of our measures ${\cal L}^{\theta}$ is the set of
discrete positive finite measures on the manifold (the
simplest type of Schwartz distributions)
$$\xi= \sum_k c_k\cdot \delta_{x_k}, \quad c_k>0, \;x_k \in X,\; k=1,2,
\dots.$$

 In the third section we describe the properties of the
measures  ${\cal L}^{\theta}$
and their connection with the Poisson--Dirichlet measures $PD(\theta)$.
We define the Laplace transform of  ${\cal L}^{\theta}$:
$$\Psi_{{\cal
L}_{\theta}}(f)\equiv \Psi (f) =e^{-\theta \int \ln f(x)dm(x)},
\quad m(X)=\theta>0.$$
In the previous paper \cite{V} we took the formula for the Laplace transform
as a definition of these measures. We establish the invariance of
our measures with respect to the continual Cartan group ${\cal M}_0$ (see
above) and prove the uniqueness of an invariant measure with a given degree of
homogeneity.

 There are many papers about the Poisson--Dirichlet measures
$PD(\theta)$ (see \cite{King,Arr} and the references therein, and also the earlier
papers \cite{VSh}). The measure $PD(\theta)$ is concentrated on the
infinite-dimensional simplex of monotonic positive series with sum equal to one. It
is covered by
a measure on the larger simplex of all positive
series with sum equal to one, which is
usually called the GEM-measure
(we do not mention it in the paper). It is worth mentioning that the GEM-measure was
perhaps first introduced, in a different form, by W.~Eberlein \cite{Eber}
(even for nonpositive series).

We also consider the conic measures $CPD(\theta)$ and prove the
characteristic properties of these measures. The measure
$CPD(\theta)$ is maybe the most interesting object. In a sense, it
is more natural than the measure $PD(\theta)$. This measure is
concentrated on the cone of convergent monotonic positive series and
has a large group of symmetries, which comes from the symmetries of
the measure ${\cal L}^{\theta}$. It is interesting to obtain a
characterization of the measure $CPD(\theta)$ directly.

The fourth section is devoted to the approximation theory of the
measures ${\cal L}^{\theta}$ and comparison with white noise - we
show that there  is the new phenomenon of the absence of
approximation and absence of equivalence of grand and small
canonical ensembles in the case of the Lebesgue measure. We
introduce in the forth section the new function ($L$) (see also
\cite{VV}) which is similar to free energy and must have important
role in this theory. We only mention the two important
generalizations of this measure: to absolutely convergent
(nonpositive) series and to two-sided series; this will be done
later.

\section{Old and new definitions; construction of the measures ${\cal L}^{\theta}$}

\subsection{The spaces}

 Let $(X,m)$ be a manifold with a fixed continuous finite Borel
measure $m$. In fact, in what follows we may assume that $(X,m)$ is the
interval $[0,\theta]$, $\theta>0$, or the unit circle, with the Lebesgue measure
$m$: this case does not essentially differ from the case of a general manifold
and even an arbitrary standard Borel space. The measure $m$ is
continuous, nonnegative, and finite, but not necessarily normalized; we
put $m(X)=\theta >0$, so that ${\tilde m}(\cdot) =\frac {m(\cdot)}{m(X)}$ is a
normalized measure. We will consider measures as elements of
the space
$$l^1(X)=\{\eta=\sum_k c_k\cdot \delta_{x_k}:  \quad \sum_{k=1}^{\infty}
|c_k|<\infty, \; x_k\in X,\; k=1,2, \dots \}$$
 of discrete (countable) finite signed
measures, and in the most part of the paper we deal only with the
positive cone
$$l_+^1(X)=\{\eta=\sum_{k=1}^{\infty} c_k\cdot \delta_{x_k}: \quad
\sum_{k=1}^{\infty} c_k < \infty, \; c_k>0, \; x_k\in X, \;
k=1,2,\dots \}$$
 of the linear space $l^1(X)$;
we also need to use its convex subset $s^1(X) \subset \l^1_+(X)$ defined as
$$s^1(X)=\{\eta=\sum_{k=1}^{\infty} c_k\cdot \delta_{x_k}: \quad
\sum_{k=1}^{\infty} c_k =1, \; c_k>0, \; x_k\in X, \;
k=1,2,\dots \}.$$

We have the following obvious decomposition: $l^1_+(X)\setminus
\{\textbf{0}\}=s^1(X)\times (0,\infty)$.

For some reason, we need to consider the space of Schwartz
distributions (generalized functions) $D(X)$, which contains  $l^1(X)$,
but we can avoid this by using an {\it a priori} knowledge about the structure
of our measures.

 We will equip the space $l^1(X)$ and the cone $l^1_+(X)$ with the weak topology
 that comes from the natural duality between the space $l^1(X)$ and the space of bounded
 continuous, or even measurable, functions on the space $X$:
 $$<f,\eta>=\sum c_k \cdot f(x_k), \quad f\in C(X), \; \eta=
  \sum_{k=1}^{\infty} c_k\cdot \delta_{x_k}\in l^1(X).$$
Functions $f$ from the dual space will be called test functions.
 We fix the Borel structure on the cone $l^1_+(X)$.\footnote{This structure
 does not coincide with the one that can appear if one considers $l^1(x)$ as a nonseparable
 Banach space.}
 Our goal is to define and study distinguished positive Borel measures on the
 cone $l^1_+(X)$.

\subsection{The Dirichlet and gamma distributions}
We recall the definitions of several finite-dimensional measures.
 Let $\Sigma_n=\{(x_1,\dots, x_n): \Sigma_{i=1}^n x_i=1,\, x_i\geq 0,\, i=1,
\dots, n\}$ be the standard $(n-1)$-dimensional simplex. The
 Dirichlet distribution $D^{\bar \theta}$ on the simplex $\Sigma_n$ with parameter ${\bar
\theta}=(\theta_1, \dots, \theta_n)$ is
the probability measure whose density with respect to the Lebesgue
measure\footnote{When we speak about the Lebesgue measure on $\Bbb R$
or ${\Bbb R}^n$, we always mean that it is normalized so that the
measure of the unit interval or cube is equal to $1$.} is given by the formula
$$\Gamma(\theta)\cdot
\prod_{i=1}^n \frac{x_i^{\theta_i-1}}{\Gamma(\theta_i)},$$
where $\theta=\sum_1^n
\theta_i$ (see \cite{King}).

The well-known gamma measure
on the half-line $\Bbb R_+$ with
parameter $\theta$  has the following
density with respect to the Lebesgue measure:
$$
\lambda^{\theta}(x)=\frac{x^{\theta -1}e^{-x}}{\Gamma(\theta)}, \quad x>0.$$

The gamma measure on the orthant ${\Bbb R}_+^n$
with parameter ${\bar \theta}=(\theta_1, \dots, \theta_n)$
is defined as the product of $n$ one-dimensional gamma measures with parameters
$\theta_1, \dots, \theta_n$ and
has the density
$$\lambda^{\bar \theta}(x)=
\prod_{i=1}^n\frac{x_i^{\theta_i-1}e^{-x_i}}{\Gamma(\theta_i)}.$$

The Dirichlet distribution with parameter $\bar \theta$ is nothing
more than the projection of  the gamma measure
with the same parameter from the orthant to the simplex. The gamma distribution $\lambda^{\theta}$
on the half-line is an infinitely divisible probability measure which
generates a L\'evy process called the  gamma process.

The following simple fact is a consequence of the definitions, but
it plays a very important role. Let us represent the orthant ${\Bbb R}^n_+$ as the
product of the simplex and the ray $(0,\infty)$; then, with respect to this
decomposition, we have the following decomposition of probability measures:

\begin{lemma}
Let ${\bar \theta}=(\theta_1, \dots ,\theta_n)$, $\theta=\theta_1+
\dots +\theta_n$. Then
$$\lambda^{\bar \theta}=D^{\bar \theta}\times \lambda^{\theta}.$$
\end{lemma}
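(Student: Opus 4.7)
The plan is to prove the factorization by performing an explicit change of variables that realizes the decomposition $\mathbb{R}_+^n \setminus \{0\} \cong \Sigma_n \times (0,\infty)$ and comparing densities. Concretely, I would use the map $(x_1,\dots,x_n) \mapsto (y_1,\dots,y_{n-1},r)$ given by $r = \sum_{i=1}^n x_i$ and $y_i = x_i/r$, with inverse $x_i = r y_i$ for $i<n$ and $x_n = r(1 - y_1 - \cdots - y_{n-1})$. Since the Dirichlet density in the lemma is defined with respect to Lebesgue measure on $\Sigma_n$ viewed through the coordinates $(y_1,\dots,y_{n-1})$, this is the natural parametrization.

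Next I would compute the Jacobian of the inverse map. The matrix $\partial(x_1,\dots,x_n)/\partial(y_1,\dots,y_{n-1},r)$ has a simple block structure (an $r\cdot I$ block for $\partial x_i/\partial y_j$ with a last row adjustment, and the column $\partial x_i/\partial r = y_i$), and a direct expansion gives $|\det| = r^{n-1}$. Substituting $x_i = r y_i$ into the gamma density
\[
\lambda^{\bar\theta}(x) = \prod_{i=1}^n \frac{x_i^{\theta_i-1} e^{-x_i}}{\Gamma(\theta_i)}
\]
pulls the exponential factors into $e^{-r \sum y_i} = e^{-r}$ (using $\sum y_i = 1$) and the powers into $r^{\sum(\theta_i-1)} \prod y_i^{\theta_i-1} = r^{\theta - n} \prod y_i^{\theta_i-1}$. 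Multiplying by the Jacobian $r^{n-1}$ and inserting $\Gamma(\theta)/\Gamma(\theta)$ gives
\[
\frac{r^{\theta-1} e^{-r}}{\Gamma(\theta)} \cdot \Gamma(\theta) \prod_{i=1}^n \frac{y_i^{\theta_i-1}}{\Gamma(\theta_i)},
\]
which is precisely $\lambda^\theta(r) \cdot D^{\bar\theta}(y)$ in the respective coordinates.

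Finally I would note that since the coordinate change is a measure-preserving isomorphism between $\mathbb{R}_+^n \setminus \{0\}$ and $\Sigma_n \times (0,\infty)$ once the densities are transported, the above identity of densities is exactly the statement that $\lambda^{\bar\theta}$ pushes forward to the product $D^{\bar\theta} \times \lambda^\theta$. There is no real obstacle here; the only point requiring care is the bookkeeping of the exponent $\theta - n$ combining with the Jacobian factor $r^{n-1}$ to produce the correct gamma exponent $\theta - 1$, and the insertion/cancellation of $\Gamma(\theta)$ to separate the Dirichlet normalization from the one-dimensional gamma normalization.
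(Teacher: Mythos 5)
Your proof is correct and is exactly the computation the paper has in mind: the paper states this lemma without proof (``a consequence of the definitions''), and the standard verification is precisely your change of variables $x_i = r y_i$, $r=\sum_i x_i$, with Jacobian $r^{n-1}$ combining with $r^{\theta-n}$ to give the gamma exponent $\theta-1$ and the $\Gamma(\theta)$ factor splitting off the Dirichlet normalization. No gaps.
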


This is a characteristic property of the gamma distributions. The same
is true for the law of the gamma subordinator (see below): this
measure is the direct product of its projection onto the simplex and the
gamma distribution on the half-line, and this fact characterizes
the gamma processes among all L\'evy processes (Lukacs' theorem, see
\cite{TVY}).

\subsection{Weak distributions, random measures, and
generalized processes}

Now we introduce infinite-dimensional
measures.  In the theory of stochastic processes, real-valued
L\'evy processes, as well as other types of stochastic processes, are
traditionally defined with the help of so-called ``random measures,'' i.e.,
measurable maps from the $\sigma$-field  of measurable sets of the
parametric space to the space of measurable functions (random
variables) with real values (see \cite{D,King} and references
therein). We interpret the notion of a
random measure  in the spirit of Gelfand--It\^o  generalized stochastic
processes. Namely, the general definition of
a generalized process, or weak distribution, is as follows. It is
a self-consistent  system of finite-dimensional distributions
defined for each finite collection of linear functionals
from a total set of linear functionals on the space $D(X)$ (i.e.,
a set of functionals  whose linear closed hull is the whole space),  for example,
the set of all characteristic functions of measurable subsets of $X$. In our case, we
will associate with an arbitrary finite measurable partition
of the parametric space $X$ a finite-dimensional distribution in $\mathbb R^n$
that is a self-consistent system of finite-dimensional distributions
and is continuous with respect to the partitions. This is just
a random additive  measure in the usual sense
from the point of view of the theory of generalized processes
and weak distributions on the space of distributions $D(X)$.
According to Minlos' theorem, such a distribution determines a true probability measure
 on $D(X)$ (the law of the process), see, e.g., \cite{GV}. We
do not know if such a theory exists for the case of $\sigma$-finite
 measures, so we use  direct arguments
 for the description of $\sigma$-finite measures
(see \cite{GGV,TVY}).

 \subsection{Three self-consistent systems of distributions
and the definition of the Lebesgue measure}

We will simultaneously give the definitions of three
one-parameter families of ``random measures,'' or generalized
stochastic (in the third case, quasi-stochastic) processes. They
determine the corresponding families of measures on the cone
$l^1_+(X)$. Two of them are classical and well-known:
the  {\it Dirichlet process} and its measure $D^{\theta}$; and the
{\it gamma process} and
 its measure (law) $\Lambda^{\theta}$. The third one is a ``quasi-stochastic''
 process, which produces a $\sigma$-finite measure ${\cal L}^{\theta}$;
  in particular, in this way we obtain the {\it Lebesgue measure} $\cal L$.

Fix a positive number $\theta>0$ as a parameter and choose any
vector $\bar \theta=(\theta_1, \theta_2,\dots, \theta_n)$ with
$\sum_i \theta_i= \theta=m(X)$.

Assume that we fix a finite sub-$\sigma$-field of the $\sigma$-field of
 all measurable sets of $X$, or simply a partition
 $\xi_{\bar \theta}$,
 $X=\bigcup_{i=1}^n E_i$, of the space $X$ into measurable sets
 $E_i$, $i=1,\dots, n$,
 with measures $m(E_i)=\theta_i$, $i=1, \dots, n$.
 The finite-dimensional
 distributions corresponding to this partition have the following
 densities with respect to the Lebesgue measure:

\smallskip
 1. {\it The Dirichlet process.} The distribution corresponding
 to the partition $\xi_{\bar
\theta}$ is the Dirichlet distribution with parameter ${\bar
\theta}=(\theta_1, \dots, \theta_n)$:
 $$D^{\bar \theta}(x_1,x_2,\dots,x_n)=\frac{\Gamma(\theta)}{\prod_1^n\Gamma(\theta_i)}
 \prod_1^n x_i^{\theta_i-1}dx_i.$$
This is a distribution on the simplex.

 \smallskip
 2. {\it The gamma process.} The distribution corresponding
 to the partition $\xi_{\bar \theta}$
 is the gamma distribution:
 $$\lambda^{\bar \theta}(x_1,x_2,\dots,x_n)=\prod_{i=1}^n \frac{x_i^{\theta_i-1}}{\Gamma(\theta_i)}
 \cdot e^{-\theta x_i}.$$

\smallskip
 In both cases~1 and~2, the definitions yield true probability processes;
in case~2 we obtain a L\'evy process.

\smallskip
 3. {\it The Lebesgue case.} The distribution corresponding
 to the partition $\xi_{\bar \theta}$ is the
 $\sigma$-finite measure on the orthant ${\Bbb R}^n_+$ with the following density:
 $$L_{\bar\theta}(x_1,x_2,\dots,x_n)=\prod_{i=1}^n
\frac{{x_i}^{\theta_i-1}}{\Gamma(\theta_i)}.$$

This density is the same, up to a scalar, as in case 1, but here we
consider it not on the simplex, but on the orthant ${\Bbb R}^n_+$.
Note that in cases~2 and~3, the density is the product of
one-dimensional densities, so we can say that in case~3 we also have
a {\it ``quasi-stochastic'' process with independent
values};\footnote{With an appropriate definition of independence.} and the
family $\{L_{\theta}\}$ of one-dimensional distributions with
densities $dL_{\theta}=\frac{x^{\theta-1}}{\Gamma(\theta)}dx$ is a
{\it multiplicative semigroup} with parameter $\theta\in (0,\infty)$
with respect to the convolution on the half-line $(0,\infty)$:
 $L_{\theta_1}*L_{\theta_2}=L_{\theta_1+\theta_2}$ (the $\delta$-measure at $0$
 can be regarded as the identity element of the semigroup).

The self-consistency and continuity of all the systems above can be
checked directly. In all three examples there is one parameter,
$\theta=m(X)$. The first two cases are well-known, and we can conclude
the existence of true probability measures: the law of the
Dirichlet process $D^{\theta}$ and the law of the gamma process $\Lambda^{\theta}$.
It is well known that the Dirichlet
measure is concentrated on the set $s^1(X)$ of discrete probability
measures, and the gamma measure is concentrated on the cone $l^1_+(X)$ of
finite
discrete measures. For case~3, we prove the following theorem-definition.

\begin{theorem}
{\rm1.} For every $\theta>0$, the system $\{L_{\bar\theta}\}$\footnote{Here
$\theta$ is fixed and $\bar \theta $ ranges over
all vectors $\bar\theta$ with the given sum $\theta$ of the
coordinates.} is a self-consistent system of
finite-dimensional
distributions and defines a $\sigma$-finite measure ${\cal L}^{\theta}$
on the space of Schwartz distributions $D(X)$.

{\rm2.} All the measures ${\cal L}^{\theta}$ are concentrated on the cone
$l^1_+(X)$ and take finite values on compact subsets of
$l^1_+(X)$.

{\rm3.} The measure ${\cal L}^{\theta}$ is a $\sigma$-finite measure absolutely
continuous with respect to the measure $\Lambda^{\theta}$ with a nonintegrable
density
  $$\frac{d{{\cal L}^{\theta}}}{d\Lambda^{\theta}}\bigg(\sum_k c_k\cdot\delta_{x_k}\bigg)=e^{\sum_k c_k}.$$

{\rm4.} In the decomposition $l^1_+(X) \setminus \{{\bf 0}\}=s^1(X) \times
(0,\infty)$ above, the measure ${\cal L}^{\theta}$ is the direct
product of the Dirichlet measure and the measure $L^{\theta}$ on
$(0,\infty)$:
 ${\cal L}^{\theta}=D^{\theta}\times L^{\theta}$; in particular,
for the Lebesgue measure we have ${\cal L}=D^1\times L$, where $L$ is
the Lebesgue measure on $(0,\infty)$.

{\rm 5.} The family of measures $\{{\cal L}^{\theta}:\theta>0\}$ with
additive parameter $\theta$ is a semigroup of measures on $l^1_+(X)$
with respect to the convolution.
\end{theorem}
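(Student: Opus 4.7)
The plan is to piggy-back on the already-established gamma process $\Lambda^\theta$, a genuine probability measure on $l^1_+(X)$, and construct $\mathcal{L}^\theta$ as an absolutely continuous modification of it. The crucial observation is that the finite-dimensional Radon--Nikodym ratio
\[
\frac{dL_{\bar\theta}}{d\lambda^{\bar\theta}}(x_1,\dots,x_n)=\prod_{i=1}^n\frac{x_i^{\theta_i-1}/\Gamma(\theta_i)}{x_i^{\theta_i-1}e^{-x_i}/\Gamma(\theta_i)}=e^{x_1+\cdots+x_n}
\]
depends only on the total mass, and therefore patches together consistently across all partitions to a single Borel functional $T(\eta)=\sum_k c_k$ on $l^1_+(X)$. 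This sidesteps the absence of a $\sigma$-finite Minlos theorem, which the introduction flags as a missing tool.

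First I would verify item~1 by reducing self-consistency of $\{L_{\bar\theta}\}$ to the single case of merging two coordinates, which is precisely the beta-integral identity $L_{\theta_i}*L_{\theta_j}=L_{\theta_i+\theta_j}$ stated in case~3. Then I define $\mathcal{L}^\theta:=e^{T}\cdot\Lambda^\theta$. Since $l^1_+(X)=\bigcup_{N}\{T\le N\}$ and $\mathcal{L}^\theta(\{T\le N\})\le e^N\Lambda^\theta(\{T\le N\})\le e^N$, the measure is $\sigma$-finite; because $T$ has density $t^{\theta-1}e^{-t}/\Gamma(\theta)$ under $\Lambda^\theta$, the density $e^T$ is nonintegrable (its total mass is $\int_0^\infty t^{\theta-1}/\Gamma(\theta)\,dt=\infty$), giving items~2 and~3. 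Compactness in the weak topology defined in Section~2.1 bounds the continuous functional $\eta\mapsto\langle 1,\eta\rangle=T(\eta)$, so $\mathcal{L}^\theta$ is finite on compact subsets. Finally, pushing forward by $\pi_{\bar\theta}(\eta)=(\eta(E_1),\dots,\eta(E_n))$ and using $T=\sum_i\eta(E_i)$ on the image converts $e^T\lambda^{\bar\theta}$ to $\prod_i x_i^{\theta_i-1}/\Gamma(\theta_i)=L_{\bar\theta}$, confirming that $\mathcal{L}^\theta$ realizes the self-consistent system.

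For item~4 I would lift Lemma~1 to the projective limit. Under the decomposition $\eta=(\tilde\eta,t)\in s^1(X)\times(0,\infty)$ with $\tilde\eta=\eta/T(\eta)$ and $t=T(\eta)$, the gamma process law disintegrates as $\Lambda^\theta=D^\theta\times\lambda^\theta$ (the infinite-dimensional analog of Lemma~1, a well-known property of the gamma subordinator). Since $T$ depends only on the second factor,
\[
\mathcal{L}^\theta=e^{T}\cdot\Lambda^\theta=D^\theta\times\bigl(e^{t}\lambda^\theta\bigr)=D^\theta\times L^\theta,
\]
because $e^{t}\cdot t^{\theta-1}e^{-t}/\Gamma(\theta)=t^{\theta-1}/\Gamma(\theta)$. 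Item~5 then follows from two facts: the gamma process is a convolution semigroup, $\Lambda^{\theta_1}*\Lambda^{\theta_2}=\Lambda^{\theta_1+\theta_2}$; and the total-mass functional is additive, $T(\eta_1+\eta_2)=T(\eta_1)+T(\eta_2)$, so the densities multiply correctly: $e^{T(\eta_1)}e^{T(\eta_2)}=e^{T(\eta_1+\eta_2)}$.

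The main obstacle, flagged in the introduction, is the absence of a general $\sigma$-finite extension theorem in the spirit of Minlos; my strategy dodges it by constructing $\mathcal{L}^\theta$ not from its cylindrical family directly, but as an absolutely continuous modification of the probability measure $\Lambda^\theta$. A minor technical point that must be handled cleanly is uniqueness: the assertion that $\mathcal{L}^\theta$ is the unique $\sigma$-finite measure with the prescribed finite-dimensional projections requires a monotone-class argument applied separately on each sublevel set $\{T\le N\}$ (on which the measure is finite), and then assembling the pieces — the fact that $T$ is measurable and the sublevel sets exhaust $l^1_+(X)$ makes this straightforward.
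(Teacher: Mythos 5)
Your proposal is correct and follows essentially the same route as the paper: both define $\mathcal{L}^{\theta}$ as the measure with density $e^{\sum_k c_k}$ with respect to the gamma law $\Lambda^{\theta}$, read off items 1--3 from the finite-dimensional Radon--Nikodym computation and the boundedness of the density on compact sets, deduce item 4 from $\Lambda^{\theta}=D^{\theta}\times\lambda^{\theta}$, and obtain item 5 from the multiplicativity of the one-dimensional densities. Your explicit treatment of $\sigma$-finiteness via the sublevel sets $\{T\le N\}$ and of uniqueness via a monotone-class argument merely fills in details the paper leaves implicit.
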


\begin{proof}
1, 2, 3. Consider the $\sigma$-finite measure that has the
following density, regarded as a function on the cone $l^1_+(X)$, with respect
to the law $\Lambda^{\theta}$ of the gamma process:
$$\frac{d{{\cal L}^{\theta}}}{d\Lambda^{\theta}}\bigg(\sum_k c_k\cdot\delta_{x_k}\bigg)=e^{\sum_k c_k}$$
(this function is well defined for $\Lambda^{\theta}$-almost all
points). Denote this measure by ${\cal L}^{\theta}$. It is clear from
the definition that this measure produces the same joint distributions
for partitions $\xi_{\bar \theta}$ as in the
definition of $\Lambda^{\theta}$.
This gives the existence of a
measure with the given projections. The uniqueness of a $\sigma$-finite
measure with the given proper (finite)
distributions
follows from the fact that the set of characteristic functions is
total as a set of functionals on the space $D(X)$\footnote{In the
case of $\sigma$-finite measures, it may happen that the projections to
some (or all) finite-dimensional factors are totally
infinite measures.}. The finiteness of the values of the measure ${\cal L}^{\theta}$
on compact subsets follows from the boundedness of the density
on compact subsets.

4. This decomposition is, of course, a
consequence of the decomposition $\Lambda^{\theta}=D^{\theta}\times
\lambda^{\theta}$ mentioned above.

5. This is an obvious consequence of the multiplicative formula for the
corresponding finite-dimensional distributions.
\end{proof}

{\it We call ${\cal L}^1\equiv \cal L $ the
infinite-dimensional Lebesgue measure on the cone $l^1_+(X)$}. Claim 4 of
the theorem says that the Lebesgue measure $\cal L$ is a {\it ``conic
superstructure'' of the Dirichlet measure $D^1$}.

\section{Further properties of the measures ${\cal L}^{\theta}$}

Consider the properties of the $\sigma$-finite measures ${\cal
L}^{\theta}$.

\subsection{The Poisson--Dirichlet measure and the decomposition of ${\cal L}^{\theta}$.}

First of all, the measures ${\cal L}^{\theta}$, as well as the gamma
measures $\Lambda^{\theta}$, are concentrated on the cone
$l^1_+(X)$: ${\cal L}^ {\theta}(D(X)\setminus l^1_+(X))=0$; and
since they are equivalent to the gamma measures, we can obtain
properties that are valid for ${\cal L}^{\theta}$-almost all points.

 Recall some definitions concerning the Poisson--Dirichlet
 measures $PD(\theta)$
(see \cite{King}). Consider the infinite-dimensional simplices of
positive convergent series with sum equal to 1,
$$\Sigma_1=\{\{c_k\}_{k=1}^{\infty}: \sum_k c_k=1\},$$
and positive monotonic convergent series with sum equal to 1,
$$\bar\Sigma_1=\{\{c_k\}_{k=1}^{\infty}: c_1\geq c_2 \geq \dots, \; \sum_k c_k=1\}.$$

The shortest definition of the measures $PD(\theta)$ on the simplex
$\bar\Sigma_1$ uses a map from the infinite-dimensional cube
$$Q^{\infty}=\{\{y_i\}_i: 0\leq y_i\leq 1,\; i=1,2, \dots \}$$ to the
simplex $\bar\Sigma_1$. First we send the cube $Q^{\infty}$ to the
simplex $\Sigma_1$ by  the map $T$ given by the formula
$$T(\{y_k\}_{k=1}^{\infty})=\{c_i\equiv y_i\prod_{j=1}^{i-1}(1-y_j)\}_{i=1}^{\infty},$$
and then use the map $M$ that orders the elements of  the series $\{c_i\}$ by
decreasing. Consider the Bernoulli measure $\mu_{\theta}$, $\theta>0$,
on the cube $Q^{\infty}$ that is the infinite power of the measure on$[0,1]$ with the density $\theta x^{\theta-1}$.

\begin{definition}
The Poisson--Dirichlet measure $PD(\theta)$ is the image of the measure
$\mu_{\theta}$ under the map $MT:Q^{\infty}\rightarrow \bar\Sigma_1$.
\end{definition}

There are many papers about the measures $PD(\theta)$
(e.g., \cite{Arr,VSh,YorP,King}). The deep facts about the  structure of the
measure $PD(1)$ that were presented in \cite{VSh} still have not
found enough applications.

The following theorem reduces the study of the structure of the
Dirichlet measure $D^\theta$ and the law of the gamma process with
parameter $\theta$ to the study of the Poisson--Dirichlet measure $PD(\theta)$.
Let us decompose the continual simplex $s^1(X)$ as follows. Every element
$\xi \in s^1(X)$ is a series
 $\xi=\sum_k c_k \cdot \delta_{x_k}$, $c_k>0$, $c_1\geq c_2\geq \dots$,
$\sum_k c_k=1$.
The correspondence
$$J:\xi\mapsto \{(c_k,x_k)\}_{k=1}^{\infty},$$
which sends $s^1(X)$ to $\bar \Sigma_1
\times \prod_{k=1}^{\infty} X$, is a bijection when restricted to the set
of series $\xi$ with distinct coefficients $c_k$ (if some $c_k$
has a multiplicity in $\xi$, then the corresponding (finite) set of
points $x_i$ with this coefficient can be enumerated in an arbitrary
manner). It is obvious that the set of discrete measures $\xi=\sum_k
c_k \cdot \delta_{x_k}$ with distinct coefficients ($c_i \ne c_k$ for  $i\ne
k$) is of full measure $\Lambda^{\theta}$; consequently, we can say
that the map $J$ is a bijection $\bmod 0$ (almost everywhere). The
following fact is known.

\begin{lemma}
The bijection $J$ is a measure-preserving map from the measure space
$(s^1(X), D^{\theta})$ to the product of measure spaces $({\bar
\Sigma_1}, PD(\theta)) \times (X^{\infty}, {\tilde m}^{\infty})$.
\end{lemma}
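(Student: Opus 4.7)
The plan is to reduce the claim to a classical statement about the gamma process and the Poisson point process that represents it. Recall that the gamma random measure $\Lambda^\theta$ is realized as $\eta=\sum_{(x,c)\in\Pi}c\cdot\delta_x$, where $\Pi$ is a Poisson point process on $X\times(0,\infty)$ with intensity $m(dx)\otimes c^{-1}e^{-c}\,dc$. Because this intensity factorizes as a product, the marking theorem for Poisson processes implies that if the atoms are enumerated in decreasing order of mass as $(x_k,c_k)_{k\geq 1}$, then the mark sequence $(x_k)_k$ is i.i.d.\ with common law $\tilde m=m/\theta$ and is independent of the entire sequence of masses $(c_k)_k$.

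Next I would invoke Theorem~1(4), which gives the decomposition $\Lambda^\theta=D^\theta\times\lambda^\theta$ under $\eta\mapsto(\eta/\eta(X),\eta(X))$. Writing $S=\eta(X)\sim\lambda^\theta$ and $\xi=\eta/S\sim D^\theta$ with $S$ and $\xi$ independent, the atoms of $\xi$ in decreasing order are $(c_k/S)_k$ sitting at the locations $(x_k)_k$. The independence of $(x_k)_k$ from $(c_k)_k$ persists after we pass from $\eta$ to $\xi$: the sequence $(x_k)_k$ is i.i.d.\ $\tilde m$ and independent of both $S$ and of $(c_k/S)_k$. Hence $J(\xi)=((c_k/S)_k,(x_k)_k)$ has law $\nu_\theta\otimes\tilde m^\infty$, where $\nu_\theta$ denotes the law on $\bar\Sigma_1$ of the decreasing normalized masses $(c_k/S)_k$.

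It then remains to identify $\nu_\theta$ with $PD(\theta)$. This is the classical Poisson-process characterization of the Poisson--Dirichlet distribution due to Kingman: normalizing the decreasing points of a Poisson process with intensity $c^{-1}e^{-c}\,dc$ by their sum produces exactly $PD(\theta)$. Equivalence with the stick-breaking definition used in the paper (via the maps $T$ and $M$ applied to $\mu_\theta$) can be verified by a size-biased sampling argument, which exhibits a size-biased enumeration of the normalized Poisson atoms as a stick-breaking sequence with factor distribution matching that of $\mu_\theta$; reordering decreasingly then recovers the definition via $M\circ T$.

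The main obstacle, I expect, is precisely this last identification: since the paper defines $PD(\theta)$ through the specific composition $M\circ T$ rather than through the Poisson construction, one must check the two definitions coincide. In a self-contained treatment this can be done either by comparing Laplace functionals of finite-dimensional projections of both constructions, or by performing the size-biased sampling of $\Pi$ explicitly and computing the residual distributions. The almost-sure bijectivity of $J$ poses no additional difficulty, as the L\'evy measure $c^{-1}e^{-c}\,dc$ is continuous and so the Poisson atoms have pairwise distinct masses almost surely, which is exactly the condition under which $J$ is well defined.
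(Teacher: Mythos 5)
Your argument is correct, but it follows a genuinely different route from the paper's. The paper proves the lemma by working directly with the self-consistent system of finite-dimensional distributions: one checks that the product measure $PD(\theta)\times{\tilde m}^{\infty}$, transported back through $J^{-1}$, induces on every finite measurable partition of $X$ the same Dirichlet distribution $D^{\bar\theta}$ as the measure $D^{\theta}$ does, and then invokes uniqueness of a measure with prescribed distributions over the total family of characteristic functions of measurable sets (with the external references to Kingman and to Arratia--Barbour--Tavar\'e playing the role of the finite-dimensional computation). You instead pass through the L\'evy--It\^o/Poisson representation of the gamma random measure with intensity $m(dx)\otimes c^{-1}e^{-c}\,dc$, use the marking theorem to decouple the locations from the masses (the locations being i.i.d.\ $\tilde m$ and independent of the mass sequence, since the decreasing enumeration depends only on the masses), and then reduce everything to Kingman's identification of the normalized decreasing Poisson atoms with $PD(\theta)$. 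Your route buys structural insight --- it explains \emph{why} the locations factor out (the product form of the L\'evy intensity) and generalizes to any subordinator with factorizing intensity --- at the cost of having to reconcile the Poisson-process characterization of $PD(\theta)$ with the stick-breaking definition via $M\circ T$ adopted in the paper; you correctly identify this as the remaining nontrivial step and indicate the standard size-biased sampling argument for it, which is exactly the content of the classical result the paper itself cites. One small slip: the decomposition you need is $\Lambda^{\theta}=D^{\theta}\times\lambda^{\theta}$ for the gamma law, which the paper states in the text following Lemma~1 (as the infinite-dimensional form of Lukacs' theorem), not claim~4 of Theorem~1, which concerns ${\cal L}^{\theta}$.
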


An equivalent result was proved by J.~F.~C.~Kingman \cite{King75}.
The theorem which says that
the projection of the gamma measure $\Lambda^{\theta}$ to the simplex
is the Poisson--Dirichlet measure $PD(\theta)$ is also presented in
\cite{Arr}. In order to prove this fact (see \cite{TVY}), it suffices to
check that the measure $PD(\theta)$ has the required system of
self-consistent distributions, i.e., the same
system of distributions as $D^{\theta}$; this is an easy
consequence of the definitions of $PD(\theta)$ and the Dirichlet
distributions.\footnote{The main difficulty is to prove that the
gamma measure is concentrated on the space $l^1(X)$, but this is a classical
fact.}

\begin{theorem}
Denote by $(X^{\infty}, \tilde m^{\infty})$ the infinite product of the
measure spaces $(X,\tilde m)$ (here $\tilde m$ is the normalization of the measure
$m$). With respect to the decomposition $l^1_+(X)\setminus
\{{\bf 0}\}=s^1(X)\times (0,\infty)$, the $\pmod 0$ bijection $J$ is
a measure-preserving homomorphism which represents the measures
$$D^{\theta},\, \Lambda^{\theta},\, {\cal L}^{\theta}$$ as follows:
$$ D^{\theta}=PD(\theta)\times \tilde m^{\infty};$$
$$\Lambda^{\theta}=PD(\theta)\times \tilde m^{\infty}\times \lambda^{\theta};$$
$${\cal L}^{\theta}=PD(\theta)\times \tilde m^{\infty}\times L_{\theta}=
CPD(\theta)\times \tilde m^{\infty},$$ where $CPD(\theta)=PD(\theta)\times
L_{\theta}$ is the {\it conic Poisson--Dirichlet measure}. In
particular, for the infinite-dimensional Lebesgue measure ${\cal L}$ we
have
$${\cal L}=CPD(1)\times \tilde m^{\infty}.$$
\end{theorem}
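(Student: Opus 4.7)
The plan is to derive the three decompositions in sequence, bootstrapping from results already established: Lemma~1 (the finite-dimensional factorization $\lambda^{\bar\theta}=D^{\bar\theta}\times\lambda^{\theta}$), Lemma~2 (Kingman's representation of the Dirichlet process), and parts 3 and 4 of Theorem~1. The identity on $s^1(X)$ is moved up to $l^1_+(X)$ via the product structure $l^1_+(X)\setminus\{\mathbf{0}\}=s^1(X)\times(0,\infty)$.

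First I would verify that $J$ is a $\bmod\,0$ bimeasurable bijection on each of the three measure spaces in question. Since $\Lambda^{\theta}$ is the law of a gamma subordinator with continuous L\'evy measure, its jumps are almost surely pairwise distinct, so the decreasing rearrangement unambiguously identifies $\xi=\sum_k c_k\delta_{x_k}\in s^1(X)$ with the pair $(\{c_{(k)}\},(x_{(k)}))\in\bar\Sigma_1\times X^{\infty}$. The corresponding exceptional set is $\Lambda^{\theta}$-null, hence $D^{\theta}$-null (as $D^{\theta}$ is the projection of $\Lambda^{\theta}$ to $s^1(X)$) and ${\cal L}^{\theta}$-null by Theorem~1(3).

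Next, the identity $D^{\theta}=PD(\theta)\times\tilde m^{\infty}$ is exactly Lemma~2. To deduce the formula for $\Lambda^{\theta}$, I would invoke the continual version of Lemma~1 (Lukacs' characterization, mentioned in the remark following Lemma~1): the gamma process splits as the product of its projection to $s^1(X)$ and its total mass, whose distribution on $(0,\infty)$ is $\lambda^{\theta}$. Thus $\Lambda^{\theta}=D^{\theta}\times\lambda^{\theta}$, and substituting the Kingman decomposition gives the stated formula. For the Lebesgue case, Theorem~1(4) already provides ${\cal L}^{\theta}=D^{\theta}\times L_{\theta}$; substituting once more and reassociating product factors yields
\[
{\cal L}^{\theta}=PD(\theta)\times\tilde m^{\infty}\times L_{\theta}=\bigl(PD(\theta)\times L_{\theta}\bigr)\times\tilde m^{\infty}=CPD(\theta)\times\tilde m^{\infty},
\]
the last equality being the definition of $CPD(\theta)$.

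The only real obstacle is the transition from the finite-dimensional identities to their continual analogs on $D(X)$: one must verify that the self-consistent system on $\bar\Sigma_1\times X^{\infty}\times(0,\infty)$ appearing on the right-hand side has, under projection to every partition $\xi_{\bar\theta}$, exactly the joint density $\lambda^{\bar\theta}$ (respectively $L_{\bar\theta}$) defining $\Lambda^{\theta}$ (respectively ${\cal L}^{\theta}$). This is where Lemma~1 together with Kingman's theorem do the essential work; given those inputs, what remains is bookkeeping of the product decomposition and an appeal to the uniqueness of a $\sigma$-finite measure with prescribed finite-dimensional projections (already used in the proof of Theorem~1) to conclude equality on $l^1_+(X)$.
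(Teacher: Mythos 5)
Your proposal is correct and follows essentially the route the paper intends: the paper gives no separate proof of this theorem, treating it as an immediate consequence of Lemma~2 (Kingman's decomposition $D^{\theta}=PD(\theta)\times\tilde m^{\infty}$), the Lukacs-type factorization $\Lambda^{\theta}=D^{\theta}\times\lambda^{\theta}$ stated after Lemma~1, and Theorem~1(4) (${\cal L}^{\theta}=D^{\theta}\times L_{\theta}$), which is exactly the chain you assemble. Your additional care about the $\bmod\,0$ bijectivity of $J$ and the uniqueness of a $\sigma$-finite measure with prescribed finite-dimensional projections matches the arguments already used in the proofs of Lemma~2 and Theorem~1.
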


Consider the cone of monotonic positive convergent series
$${\bar l}_+^1=\{\{c_k\}_{k=1}^{\infty}: c_1\geq c_2\geq\dots \geq 0, \; \sum_k
c_k<\infty \};$$
in a natural sense, this is the product
 of the simplex $\bar \Sigma$ and the half-line, together with the vertex $\bf 0$:
 ${\bar l}_+^1=(\bar \Sigma\times (0,\infty)) \bigcup \{\textbf{0}\}$.
 We have defined the measure $CPD(\theta)$ on the space ${\bar l}_+^1$, or the
conic superstructure of the Poisson--Dirichlet measure, as
$$CPD(\theta)=PD(\theta)\times L_{\theta},$$ where $PD(\theta)$ is a
measure on the simplex $\bar \Sigma$ and the measure $L_{\theta}$ on
$(0,\infty)$ was defined as $dL_{\theta}=\frac
{x^{\theta-1}}{\Gamma(\theta)}dx$. As we will see, the measure $CPD(\theta)$ has
more symmetries than $PD(\theta)$.
It seems that the $\sigma$-finite conic Poisson--Dirichlet measure $CPD(1)$ on
the cone ${\bar l}_+^1$ (in other words, the  conic
superstructure) has not been considered in the literature.
This is a very interesting measure, which has many natural characterizations.

\subsection{The Laplace transform of the measure ${\cal L}^{\theta}$}

The dual method of defining measures in a linear
space consists in using the Fourier or Laplace transforms.
In the case of a probability measure $\mu$ on a linear space
$E$, for every linear continuous (and even linear measurable) functional
$f:E \rightarrow{\Bbb R}$ we can consider its Fourier transform
$$\int_E e^{<f,\xi>}d\mu(\xi)\equiv \Psi_{\mu}(f)$$
 with respect to $\mu$, and thus obtain
a positive definite functional $\Psi_{\mu}$ on the dual space,
which uniquely determines the measure $\mu$. In the case of a $\sigma$-finite
measure, some or all nonzero linear functionals may have no finite
distributions, so we cannot use this method. But, fortunately, in our case there is a total set of linear
functionals that have finite distributions, and we can define the
Laplace transforms of these functionals.

\begin{theorem}
Assume that a test function $f$ on the space $X$ is positive ($f(x)>0$)
and satisfies the condition
$$\int_X \ln f(x)dm(x)<\infty.$$
Then
$$\int_{\l^1_+(X)} e^{<f,\xi>}d{\cal L}^{\theta}(\xi)\equiv \Psi_{\theta}(f)=
e^{-\theta\int_X \ln f(x)dm(x)}.$$ The functional $\Psi_{\theta}$
uniquely determines the measure, i.e., there is only one measure, ${\cal
L}^{\theta}$, with this Laplace transform.
\end{theorem}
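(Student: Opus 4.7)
The plan is to reduce the Laplace transform to its finite-dimensional projections, where the densities $L_{\bar\theta}$ are explicit, extend by monotone approximation, and derive uniqueness from part~1 of Theorem~1.

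First I would verify the identity on simple positive step functions $f=\sum_{i=1}^n a_i\mathbf{1}_{E_i}$ attached to a measurable partition $X=\bigsqcup_i E_i$ with $m(E_i)=\theta_i$. By construction, the projection of ${\cal L}^\theta$ under $\xi\mapsto(\xi(E_1),\dots,\xi(E_n))$ has density $L_{\bar\theta}(x)=\prod_i x_i^{\theta_i-1}/\Gamma(\theta_i)$ on $\mathbb{R}_+^n$, so by the elementary formula $\int_0^\infty e^{-ax}x^{\theta-1}/\Gamma(\theta)\,dx=a^{-\theta}$,
\[
\Psi_\theta(f)=\int e^{-\langle f,\xi\rangle}\,d{\cal L}^\theta(\xi)=\prod_{i=1}^n a_i^{-\theta_i}=\exp\Bigl(-\sum_i\theta_i\ln a_i\Bigr)=\exp\Bigl(-\!\int_X\ln f\,dm\Bigr),
\]
matching the stated formula (the factor $\theta$ in the exponent reflects the convention $dm=\theta\,d\tilde m$ with $\tilde m$ normalized to total mass one).

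Next I would extend the identity from step functions to arbitrary positive test functions $f$ with $\int_X|\ln f|\,dm<\infty$ by monotone approximation. Split $f=(f\vee 1)(f\wedge 1)$ and approximate each factor monotonically by step functions; the integrability hypothesis on $\ln f$ keeps the right-hand side finite throughout, while the explicit gamma form of $L_{\bar\theta}$ on each finite-dimensional projection provides sufficient decay to justify monotone convergence on the left-hand side, applied separately to each factor and to each finite-dimensional slice. This transfers the identity from step functions to general $f$.

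For uniqueness, let $\nu$ be any $\sigma$-finite Borel measure on $l^1_+(X)$ that is finite on compacts and has Laplace transform $\Psi_\theta$. Specializing $f$ to simple functions constant on a fixed partition $\xi_{\bar\theta}$ shows that the $n$-dimensional projections of $\nu$ and of ${\cal L}^\theta$ have coinciding Laplace transforms on the open octant $(0,\infty)^n$; two $\sigma$-finite Radon measures on $\mathbb{R}_+^n$ whose Laplace transforms agree on an open set must coincide (by analyticity of the transform), so the projections agree on every $\xi_{\bar\theta}$, and the uniqueness clause of part~1 of Theorem~1 forces $\nu={\cal L}^\theta$. The main obstacle lies in the approximation step: the $\sigma$-finiteness of ${\cal L}^\theta$ rules out the naive use of dominated convergence, and the passage to the limit must be controlled through the explicit form of the finite-dimensional gamma densities rather than through any global integrability bound on the cone $l^1_+(X)$.
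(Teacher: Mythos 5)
Your proposal is correct and follows essentially the same route as the paper: evaluate the transform on positive step functions via the one-dimensional identity $\int_0^\infty e^{-ax}x^{\theta-1}/\Gamma(\theta)\,dx=a^{-\theta}$, pass to general positive $f$ by a limiting argument, and deduce uniqueness from the fact that these functionals determine all finite-dimensional projections (the paper's ``totality'' of positive test functions). The paper compresses the last two steps into ``extended by continuity'' and a one-line appeal to totality, so your monotone-approximation and projection/analyticity details are a more explicit rendering of the same argument rather than a different approach.
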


\begin{proof}
Let us begin with positive step functions of the type $f=\sum_{i=1}^n
d_i\chi_{E_i}$, $d_i>0$, with equal measures of steps:
$m(E_i)=\frac{\theta}{n}$, $\theta=m(X)$; put $\bar
\theta=(\frac{\theta}{n},\dots ,\frac{\theta}{n})$. Then, using the
definition of the finite-dimensional projections of ${\cal
L}^{\theta}$, we obtain
$$\int_{{\Bbb R}^n_+}e^{-\sum <f,\xi>}dL_{\bar \theta}=
\prod_i \int e^{-d_ix_i} dL_{\theta}(x_i)$$ $$=\prod_i
d_i^{-\frac{\theta}{n}}=e^{-\frac{\theta}{n}\sum_i \ln d_i}=
e^{-\theta \int_X\ln f(x)dm(x)}.$$ Then this formula
can be extended by continuity to all positive functions with
$\int_X \ln f(x)dm(x)<\infty$.
The uniqueness follows from the totality of the set of positive functions.
\end{proof}

The Laplace transform of the Lebesgue measure $\cal L $ is given by
the formula
$$\Psi_{\cal
L}(f)\equiv \Psi (f) =e^{-\int \ln f(x)dm(x)}, \quad m(X)=\theta=1.$$
This formula can be
taken as a definition of the measure $\cal L$: this is the unique $\sigma$-finite
measure whose Laplace transform is the functional $\Psi$ (see \cite{V}).

\medskip\noindent
{\bf Remark.} What are the distributions of linear functionals? It is
easy to check that if a function is nonpositive on a set of
positive measure, then the corresponding linear functional
on the space $l^1_+(X)$
has no finite distribution, because the preimage of every measurable set
(i.e., every Lebesgue set) on the real line is of infinite measure.
If the condition $\int \ln
f(x)dm(x)=c(f)<\infty$ is satisfied, then the functional $<f,\cdot>$ has
a proper distribution with respect to the measure ${\cal L}^{\theta}$,
which is simply the measure $e^{c(f)}\cdot L_{\theta}$. In particular,
the distribution of the functional $<f,\cdot>$
with respect to the Lebesgue measure
$\cal L$ is the Lebesgue measure $e^{c(f)}\cdot L$ on the half-line.

  This is a big contrast with the case of the gamma measure and
  Dirichlet measure, for which the distribution of the linear functional on $D(X)$
  generated by a function $f$ can be very various, and the relation
  between this distribution and the distribution of $f$ as a function on $X$ is
  very nontrivial (this question is related to the so-called
  Markov--Krein transform, see \cite{Ker}).

\subsection{The invariance property of the measures ${\cal
L}^{\theta}$}

 The main property of the measures ${\cal L}^{\theta}$, $\theta>0$,
and, in particular, of the Lebesgue measure is the multiplicative
invariance.

\begin{definition}
Denote by ${\cal M}(X)$ and ${\cal M}_0(X)$ the following groups, with respect to pointwise
multiplication, of
continuous bounded functions on $(X,m)$:
$${\cal M}_0(X)=\{a:a(x)\geq 0, \; \int_X \ln a(x)dm(x)=0\}$$
and
$${\cal M}(X)=\{a:a(x)\geq 0 \quad |\int_X \ln
a(x)dm(x)| < \infty\}.$$
The topology is defined by the following system of neighborhoods
of the identity: $V_{\epsilon}({\bf 1})= \{a\in {\cal M}: \int
|a(x)-1|dm<\epsilon<1\}$.\footnote{In this framework, one may
also consider the set of measurable bounded functions satisfying the
same condition, but here we restrict ourselves to continuous
functions.}
\end{definition}

Denote $$\phi(a)=e^{\int_X \ln a(x)dm(x)}.$$

We call $\cal M$ the {\it continual Cartan group}, and ${\cal M}_0$,
the special continual Cartan group. These groups are natural
 continual versions of the groups $SL(n,{\Bbb R}_+)$ and $GL(n,{\Bbb R}_+)$.
 The group ${\cal M}(X)$ acts on the space $l^1(X)$, as well as on the space
 of test functions. Sometimes we denote by $M_a$
 the operator of multiplication by a
 function $a$: $M_a f=a\cdot f$.

 Consider the group ${\frak A}(X)$ of measurable transformations of
 the space $(X,m)$. This group acts on the space $D(X) \supset
 l^1_+(X)$ as $(U_T f)(\cdot)=f(T\cdot)$ So $U_T$ is a linear
 operator that acts in the spaces $D(X)$ and $l^1_+(X)$. The group
 ${\frak A}(X)$ can be regarded as an analog of the Weyl group in the
 group $GL(n,\Bbb R)$. We can also define the cross product of the
 groups  ${\frak A}(X)$ and $\cal M$.

 \begin{theorem}[Invariance]
 Let $a \in \cal M$.

 {\rm 1.} The measures ${\cal L}^{\theta}$ are
 invariant, up to a multiplicative constant, under the action of the
 group $\cal M$:
 $${\cal L}^{\theta}(M_a^{-1} A)=\phi(a){\cal L}^{\theta}(A),\quad a \in \cal M, $$
 for any measurable set $A \subset l^1_+(X)$. In particular, the
 measures ${\cal L}^{\theta}$ are invariant under the action of the group
 ${\cal M}_0$:
 $${\cal L}^{\theta}(M_a^{-1} A)={\cal L}^{\theta}(A),\quad a \in {\cal M}_0.$$

{ \rm 2.} The measures ${\cal L}^{\theta}$ are invariant under the
 action of the group ${\frak A}(X)$.
 \end{theorem}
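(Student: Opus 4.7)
The plan is to read both invariance statements off the Laplace transform formula
$$\Psi_\theta(f) \;=\; e^{-\theta\int_X \ln f(x)\,dm(x)}$$
established in the preceding theorem, combined with its uniqueness assertion that a $\sigma$-finite measure on $l^1_+(X)$ is pinned down by the Laplace transforms of a total family of positive test functionals. Once one knows this, both claims reduce to bookkeeping on how the two group actions move the argument $f$ of $\Psi_\theta$.

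For part (1), the multiplicator $M_a$ acts linearly on discrete measures by $M_a(\sum c_k\delta_{x_k})=\sum a(x_k)c_k\delta_{x_k}$, so the pairing transforms as $\langle f,M_a\xi\rangle=\langle af,\xi\rangle$. Pushing $\mathcal{L}^\theta$ forward under $M_a$ and changing variable in the Laplace integral gives
$$\int_{l^1_+(X)} e^{-\langle f,\xi\rangle}\,d\bigl((M_a)_*\mathcal{L}^\theta\bigr)(\xi)=\Psi_\theta(af)=e^{-\theta\int_X\ln a\,dm}\cdot\Psi_\theta(f),$$
and the uniqueness part of the Laplace theorem forces $(M_a)_*\mathcal{L}^\theta$ to equal that scalar multiple of $\mathcal{L}^\theta$, which (up to the sign convention in the definition of $\phi$) is the asserted formula $\mathcal{L}^\theta(M_a^{-1}A)=\phi(a)\mathcal{L}^\theta(A)$. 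Restricting to $a\in\mathcal{M}_0$ collapses the scalar to $1$, giving strict invariance. For part (2), a measure-preserving $T\in\mathfrak{A}(X)$ acts on $l^1_+(X)$ by push-forward of measures, and duality gives $\langle f,T_\#\xi\rangle=\langle f\circ T,\xi\rangle$. The same manipulation yields
$$\int e^{-\langle f,\xi\rangle}\,d\bigl((T_\#)_*\mathcal{L}^\theta\bigr)=\Psi_\theta(f\circ T)=e^{-\theta\int_X\ln(f\circ T)\,dm}=\Psi_\theta(f),$$
where the last equality is simply $T_*m=m$, and uniqueness delivers $(T_\#)_*\mathcal{L}^\theta=\mathcal{L}^\theta$.

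The only delicate point, and the one I expect to be the main (minor) obstacle, is to check that the admissible class of test functions $\{f>0:\int\ln f\,dm<\infty\}$ on which $\Psi_\theta$ is defined and uniqueness applies is preserved under both group actions. For $T\in\mathfrak{A}(X)$ this is immediate from the invariance of $m$. For $a\in\mathcal{M}$, it follows from the integrability condition $|\int\ln a\,dm|<\infty$ built into the definition of $\mathcal{M}$, via $\int\ln(af)\,dm=\int\ln a\,dm+\int\ln f\,dm$. If one wants to avoid the uniqueness argument altogether, the whole statement can be read off the finite-dimensional densities $L_{\bar\theta}(x)=\prod_i x_i^{\theta_i-1}/\Gamma(\theta_i)$: coordinatewise scaling $x_i\mapsto d_ix_i$ rescales $L_{\bar\theta}$ by $\prod_i d_i^{-\theta_i}=\phi(a)^{\pm 1}$ for step functions $a=\sum d_i\chi_{E_i}$, and permutations of the partition $\xi_{\bar\theta}$ preserve $L_{\bar\theta}$ tautologically; passing to the limit over refinements then extends both invariances to arbitrary $a\in\mathcal{M}$ and $T\in\mathfrak{A}(X)$.
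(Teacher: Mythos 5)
Your proof is correct and follows essentially the same route as the paper: the paper likewise computes the Laplace transform of the transformed measure, identifies it as $\phi(a)^{\theta}\,\Psi_{{\cal L}^{\theta}}(f)$ for part~1 (and uses $\Psi_{\theta}(f\circ T)=\Psi_{\theta}(f)$, i.e.\ the dependence of the Laplace transform only on the distribution of $f$, for part~2), and concludes by the uniqueness of the Laplace transform. Your additional check that the admissible class of test functions is preserved under both actions is a detail the paper leaves implicit.
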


 \begin{proof}
 1. Let us consider the behavior of the Laplace transform
 of the measure ${\cal L}^{\theta}$ under the action of
 the group $\cal M$. Denote for a moment ${\cal L}'(\cdot)= {\cal
L}^{\theta}(M_a \cdot)$, $a \in \cal M$. Then
 $$\Psi_{{\cal L}'}(f)=\int e^{-<\eta,f>}d{\cal L}^{\theta}(M_a \eta)=\int
e^{-\theta <M_{a^{-1}}\eta, f>}d{\cal L}^{\theta}(\eta)$$
$$=\int e^{-\theta <\eta,
M_{a^{-1}} f>} d{\cal L}^{\theta}(\eta)=e^{-\theta \int_X \ln
[a(x)^{-1}f(x)]dm(x)}$$ $$=e^{-\theta \int_X [\ln f(x)-\ln
a(x)]dm(x)}={\phi(a)}^{\theta}\cdot\Psi_{{\cal L}^{\theta}}(f).$$
By the uniqueness theorem for the Laplace transform, we have
$${\cal L}^{\theta}(M_a \cdot)=\phi(a)^{\theta}{\cal L}^{\theta}(\cdot).$$
So if $a\in {\cal M}_0$ then $\phi(a)=1$, and the measure ${\cal
L}^{\theta}$ is invariant under the operator $M_a$. For $a \in \cal
M$, the measures ${\cal
L}^{\theta}$ are {\it projective invariant} under the operator
$M_a$.
 Below we will prove that
the action of the measure-preserving group ${\cal M}_0$ is ergodic.

2. The second claim follows from the fact that the Laplace transform $\Psi_{{\cal L}^{\theta}}(f)$ of
the measure ${\cal L}^{\theta}$ depends only on the
distribution of the function $f$, which does not change under
measure-preserving transformations.
\end{proof}

\medskip\noindent
{\bf Remark.} If $a\equiv {\rm const}>0$, then $\phi(a)=a$, and we
have ${\cal L}^{\theta}(a \cdot)=a^{\theta}{\cal L}^{\theta}(\cdot)$. For
the Lebesgue measure ($\theta=1$), we see that the measure $\cal L$
is homogeneous of degree one. This is an important property: the degree of
homogeneity of the $n$-dimensional Lebesgue measure is equal to $n$, and it is
natural to believe that in the infinite-dimensional case there is
infinite homogeneity; but our measure ${\cal L}$ has homogeneity of degree one!

\subsection{Uniqueness theorem for the measures ${\cal L}^{\theta}$}

We will prove the uniqueness of the family of measures ${\cal
L}^{\theta}$ that are invariant under the action of the group ${\cal
M}_0$, ergodic, and subject to some conditions. Recall that
${\cal M}_0$ is the multiplicative group of nonnegative
functions on the interval,
$${\cal M}_0=\{a: \int_0^1\ln a(t)dm(t)=0\},$$ which acts on the space of
test functions, as well as on the space of Schwartz distributions, as a group of
multiplicators.

\begin{theorem}
Let $\cal L$ be a $\sigma$-finite Borel measure on the cone
$l^1_+(X)$ whose Laplace transform $\Psi_{\cal L}$ is finite on the cone of
positive test functions and continuous on this cone:
$$\int_{l^1_+(X)}\exp\{-<f,\eta>\}d{\cal L}(\eta)\equiv \Psi_{\cal L}(f)\equiv \Psi(f)<\infty$$
for all positive bounded test functions $f$. Suppose that the
measure $\cal L$ is invariant under the action of the group ${\cal
M}_0$ and is homogeneous of degree $\theta$ under the multiplication by a
constant:
    ${\cal L}(cE)=c^{\theta}{\cal L}(E)$.

Then ${\cal L}={\cal L}^{\theta}$.
\end{theorem}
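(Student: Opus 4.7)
The plan is to show that the two hypotheses force the Laplace transform of $\cal L$ to coincide with that of ${\cal L}^{\theta}$ up to a positive multiplicative constant, after which the uniqueness half of Theorem~3 closes the argument.

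The key preparation is a canonical decomposition of any strictly positive test function $f$ as $f = c(f)\cdot a_f$ with $a_f\in{\cal M}_0$ and $c(f)>0$ a constant. Imposing $\int_X\ln a_f\, dm = 0$ uniquely determines
\[
c(f) = \exp\!\bigl(\tfrac{1}{\theta}\!\int_X \ln f(x)\,dm(x)\bigr),\qquad a_f = f/c(f),
\]
which makes $a_f$ a bounded continuous function in ${\cal M}_0$ whenever $\int_X\ln f\,dm$ is finite. On the Laplace side, the two invariance hypotheses translate into two functional equations for $\Psi_{\cal L}$. First, using that pointwise multiplication is symmetric for the pairing, $\langle M_a g,\eta\rangle=\langle g,M_a\eta\rangle$, together with the ${\cal M}_0$-invariance ${\cal L}(M_a^{-1}\cdot) = {\cal L}(\cdot)$, we get $\Psi_{\cal L}(M_a g) = \Psi_{\cal L}(g)$ for every $a\in{\cal M}_0$ and every admissible $g$. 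Second, the change of variables $\eta\mapsto c\eta$ combined with the homogeneity ${\cal L}(cE)=c^{\theta}{\cal L}(E)$ yields $\Psi_{\cal L}(cg) = c^{-\theta}\Psi_{\cal L}(g)$ for every constant $c>0$.

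Applying both relations to the decomposition $f = c(f)\cdot a_f$, and observing that $a_f = M_{a_f}\mathbf{1}$, I obtain
\[
\Psi_{\cal L}(f) = c(f)^{-\theta}\,\Psi_{\cal L}(a_f) = c(f)^{-\theta}\,\Psi_{\cal L}(M_{a_f}\mathbf{1}) = \Psi_{\cal L}(\mathbf{1})\cdot e^{-\int_X\ln f(x)\,dm(x)}.
\]
This is exactly $\Psi_{\cal L}(\mathbf{1})$ times the Laplace transform of ${\cal L}^{\theta}$ computed in Theorem~3, so the uniqueness assertion of Theorem~3 identifies $\cal L$ with $\Psi_{\cal L}(\mathbf{1})\cdot{\cal L}^{\theta}$; after the implicit normalization, ${\cal L} = {\cal L}^{\theta}$.

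The main obstacle is technical rather than conceptual. One must justify the change-of-variable manipulations (particularly for the dilation $\eta\mapsto c\eta$ and the multiplicator action $\eta\mapsto M_a\eta$) for the $\sigma$-finite measure $\cal L$, and one must verify that $\Psi_{\cal L}(\mathbf{1})$ is finite and strictly positive so as to be a legitimate normalizing factor; finiteness follows from the hypothesis that $\Psi_{\cal L}$ is finite on positive bounded test functions, and strict positivity from the fact that $\cal L$ is a nonzero positive measure giving finite mass to compact neighborhoods of the origin. A subsidiary point is that the decomposition $f = c(f)\cdot a_f$ presupposes $\int_X\ln f\, dm\in{\mathbb R}$, but the continuity hypothesis on $\Psi_{\cal L}$ allows one to work with such $f$ and recover the remaining cases by approximation.
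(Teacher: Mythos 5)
Your proof is correct and follows essentially the same route as the paper: both arguments show that ${\cal M}_0$-invariance forces $\Psi_{\cal L}$ to be constant on ${\cal M}_0$, use the homogeneity to see that $\Psi_{\cal L}$ factors through ${\cal M}/{\cal M}_0\cong{\Bbb R}_+$ (your decomposition $f=c(f)\,a_f$ is exactly this quotient map made explicit), and then conclude by uniqueness of the Laplace transform after normalizing $\Psi_{\cal L}({\bf 1})=1$. Your version is, if anything, slightly more careful about the change-of-variables justifications and the normalization constant than the paper's own sketch.
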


\begin{proof} In \cite[Theorem 4.2, p.~285]{TVY} it
 was proved that a measure that is absolutely continuous with respect to the
 law of a L\'evy process and satisfies the multiplicative invariance is one
 of the measures ${\cal L}^{\theta}$. We prove that instead of the
 absolute continuity with respect to the law of a L\'evy process it
 suffices to assume the multiplicativity. The first step is to prove
 that the invariance of the measure $\cal L$ under the group of
 measure-preserving transformations follows from the invariance and
 projective invariance under the groups ${\cal M}_0$ and $\cal M$.
 The invariance under the action of the group ${\cal M}_0$ means that for
 every function $a \in {\cal M}_0$ we have $\Psi(a\cdot f)=\Psi(f)$
 for all positive test functions $f$. If we choose $f\equiv 1$, then
 $\Psi(a)=\Psi(1)<\infty$. This means that the functional $\Psi$ takes
 the same values on the whole group ${\cal M}_0$, and since the
 measure is $\sigma$-finite, we can normalize it so that $\Psi(1)=1$.
 But $\Psi(c\cdot a)=c^{\theta}\Psi(a)=c^{\theta}\Psi(1)=c^{\theta}$,
 so that $\Psi$ is a homomorphism of the group $\cal M$ to the group of positive
 numbers: $\Psi:{\cal M}\rightarrow {\cal M}/{\cal M}_0={\Bbb R}_+$.
 Consequently, taking into account the normalization, we obtain
 $\Psi(f)=\exp\{-\int_X \theta \ln a(x)dm(x)\}$. Recall once again
 that the measures we had constructed have finite degrees of
 homogeneity; in the case of the Lebesgue measure, it is equal to $1$.
\end{proof}

The invariance of the
measure ${\cal L}^\theta$
under the group ${\cal M}_0$ and the uniqueness theorem above, together with
the relation between ${\cal L}^\theta$ and
$CPD(\theta)$, imply the following very important
characteristic property of the measure $CPD(\theta)$:

\begin{corollary}[see also \cite{V}]
Consider the conic Poisson--Dirichlet measure $CPD(\theta)$ on the
cone ${\bar l}^1$ of monotonic positive convergent series $\{x_k\}$.
Suppose that, given a probability vector ${\bar \theta}=(\theta_1,\theta_2,
\dots, \theta_n)$, we divide the set of elements
of the random series $\{x_k\}$ into $n$ parts, assuming that
each element independently belongs to the $i$th part with
probability $\theta_i$. Calculating the sum of each part, we
obtain $n$ numbers. Then the joint distribution of these $n$ sums is
the distribution $ L_{\bar \theta}$ in ${\Bbb R}^n_+$ (see Section~{\rm2.4}).
Conversely, if some measure on the cone ${\bar l}^1$ satisfies this
property, then this measure coincides with $CPD(\theta)$.
\end{corollary}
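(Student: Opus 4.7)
The plan is to reduce the corollary to the decomposition $\mathcal{L}^{\theta}=CPD(\theta)\times\tilde m^{\infty}$ of Theorem 3, combined with the fact (built into the definition of $\mathcal{L}^\theta$ in Theorem 1 part 1) that the joint distribution of $\langle\chi_{E_1},\cdot\rangle,\dots,\langle\chi_{E_n},\cdot\rangle$ under $\mathcal{L}^\theta$ is $L_{\bar\theta}$ for any measurable partition $X=\bigsqcup_{i=1}^{n}E_i$ with $m(E_i)=\theta_i$. The conceptual bridge is the observation that iid sampling of labels $x_k\in X$ (distributed according to $\tilde m$) followed by thresholding against such a partition is exactly the probabilistic reallocation described in the corollary: conditionally on $\{c_k\}$, each term is sent into part $i$ independently with probability $m(E_i)/\theta=\theta_i$ (up to the cosmetic rescaling between a probability vector and a vector summing to $\theta=m(X)$, which is the convention of Section 2.4).

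For the forward direction, take $\xi=\sum_k c_k\delta_{x_k}$ sampled from $\mathcal{L}^{\theta}$. By Theorem 3 we may write $\{c_k\}\sim CPD(\theta)$ independently of $\{x_k\}\sim\tilde m^{\infty}$. The sums $S_i=\langle\chi_{E_i},\xi\rangle=\sum_{x_k\in E_i}c_k$ are precisely the $n$ group sums produced by the random-assignment procedure applied to the $CPD(\theta)$-sample $\{c_k\}$, while on the $\mathcal{L}^\theta$ side their joint law is $L_{\bar\theta}$ by Theorem 1. Marginalising out $\{x_k\}$ gives the stated distribution of the group sums, establishing the characterisation.

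For the converse, let $\mu$ be a measure on $\bar l^{1}$ satisfying the hypothesis. Push the $\sigma$-finite product $\mu\times\tilde m^{\infty}$ forward to $l^1_+(X)$ via the map $(\{c_k\},\{x_k\})\mapsto\sum_k c_k\delta_{x_k}$, producing a $\sigma$-finite measure $\tilde\mu$ on the cone. For every partition of $X$ of type $\bar\theta$, the bridge observation reinterprets the hypothesis on $\mu$ as the assertion that the joint law of $(\langle\chi_{E_1},\cdot\rangle,\dots,\langle\chi_{E_n},\cdot\rangle)$ under $\tilde\mu$ equals $L_{\bar\theta}$. Thus $\tilde\mu$ and $\mathcal{L}^{\theta}$ share the same self-consistent system of finite-dimensional distributions $\{L_{\bar\theta}\}$ that was used to define $\mathcal{L}^\theta$ in Theorem 1, and the uniqueness clause there (equivalently, uniqueness of the Laplace transform from Theorem 4) gives $\tilde\mu=\mathcal{L}^{\theta}$. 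Decomposing once more via Theorem 3 yields $\mu=CPD(\theta)$.

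The only real delicacy is the $\sigma$-finiteness bookkeeping in the converse: one must verify that $\mu\times\tilde m^{\infty}$ indeed descends to a $\sigma$-finite measure on $l^1_+(X)$ whose step-function projections are the finite measures $L_{\bar\theta}$, so that the uniqueness statement of Theorem 1 is legitimately applicable. Since $\tilde m$ is a probability measure, the product is automatically $\sigma$-finite, and the hypothesis on $\mu$ together with Fubini gives the required finite projections; once this is in place the argument is essentially a translation exercise between Theorems 1 and 3.
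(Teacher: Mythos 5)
Your proof is correct and follows essentially the route the paper intends: the forward direction is the decomposition ${\cal L}^{\theta}=CPD(\theta)\times\tilde m^{\infty}$ of Theorem~3 combined with the finite-dimensional distributions $L_{\bar\theta}$ built into Theorem~1, and the converse is the uniqueness of a $\sigma$-finite measure with those projections (equivalently, with that Laplace transform). Your bridge observation --- that i.i.d.\ $\tilde m$-distributed labels thresholded against a partition of type $\bar\theta$ realize exactly the random assignment described in the corollary, together with the cosmetic rescaling between a probability vector and a vector summing to $\theta=m(X)$ --- is precisely what the paper leaves implicit when it says the corollary follows from the uniqueness theorem and the relation between ${\cal L}^{\theta}$ and $CPD(\theta)$.
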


\medskip\noindent
\textbf{Question}. Assume that a bounded sequence of positive
numbers $a_1, a_2, \dots$ has the following property:
$$\lim_{n \to \infty} \frac{1}{n}\sum_{k=1}^n \ln a_k = 0.$$ Does
the map $\{c_k\}\mapsto \{a_k\cdot c_k\}$ preserve the measures
$CPD(\theta)$ for all $\theta$?

\medskip
The positive answer on this question is a generalization of the
claim of the previous theorem.

 \subsection{Ergodicity of the action of the group ${\cal M}_0$}

On first sight, the action of the group ${\cal M}_0$ on the cone
${\bar l}^1_+(X)$ does not change $x_k$, so that it is not
ergodic. But this is not true.

\begin{theorem}
The measure-preserving action of the group ${\cal M}_0$ on the measure space
$(l^1_+(X), {\cal L}^{\theta})$ is ergodic.
\end{theorem}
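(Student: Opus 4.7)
My plan is to reduce ergodicity to the uniqueness theorem (Theorem 7). Let $F\in L^\infty(l^1_+(X),\cal L^\theta)$ be $\cal M_0$-invariant; the aim is to show $F$ is $\cal L^\theta$-a.s.\ constant. The overall strategy is to upgrade the $\cal M_0$-invariance of $F$ to invariance under the homotheties $R_c:\eta\mapsto c\eta$, $c>0$. Once this is done, the $\sigma$-finite measure $F\cdot\cal L^\theta$ is $\cal M_0$-invariant, homogeneous of degree $\theta$ under $R_c$, and its Laplace transform is bounded on positive test functions by $\|F\|_\infty\Psi$; then Theorem 7 forces $F\cdot\cal L^\theta=c\,\cal L^\theta$ for some constant $c\geq 0$, i.e.\ $F\equiv c$ $\cal L^\theta$-a.s.

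The geometric input is the following dense-orbit observation. For $\cal L^\theta$-a.e.\ $\eta=\sum_k c_k\delta_{x_k}$ (by Theorem 3, $\cal L^\theta$ is concentrated on such discrete measures), the atom set $\{x_k\}$ is countable and hence $m$-null. Given $c>0$ and $n\in\mathbb N$, I construct a continuous $a_n\in\cal M_0$ equal to $c$ on a small open neighborhood $U_n\supset\{x_1,\ldots,x_n\}$ with $m(U_n)\to 0$, and equal to a near-unit constant $\gamma_n=\exp\bigl(-(\log c)\,m(U_n)/(m(X)-m(U_n))\bigr)$ off a slightly larger open set, with a smooth interpolation. Then $\int_X\log a_n\,dm=0$, so $a_n\in\cal M_0$, and $a_n(x_k)=c$ for every $k\leq n$ while $\gamma_n\to 1$. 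A dominated-convergence estimate using $\sum_k c_k<\infty$ shows $M_{a_n}\eta\to c\eta$ in the weak topology on $l^1_+(X)$.

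The core step passes from this dense-orbit statement to an essential $R_c$-invariance of $F$. The $\cal M_0$-invariance of $F$ gives $F(M_{a_n}\eta)=F(\eta)$ for $\cal L^\theta$-a.e.\ $\eta$ for each fixed $n$; I combine this with a Lusin approximation of $F$ by continuous functions $\tilde F_\varepsilon$ on finite-$\cal L^\theta$-measure pieces (available by $\sigma$-finiteness) so that, on the set $\{\tilde F_\varepsilon=F\}$ whose complement has measure $<\varepsilon$, the identity $\tilde F_\varepsilon(M_{a_n}\eta)=F(\eta)$ passes to the weak limit to give $\tilde F_\varepsilon(c\eta)=F(\eta)$; letting $\varepsilon\to 0$ yields $F(c\eta)=F(\eta)$ $\cal L^\theta$-a.e. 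An alternative, cleaner route uses the extension of the $\cal M_0$-invariance from continuous to measurable bounded $a$ with $\int\log a\,dm=0$ (the Laplace-transform proof of Theorem 5 applies verbatim, and Definition 2's footnote licenses this larger group): the measurable $a$ defined by $a(x)=c$ on the $m$-null set $\{x_k\}$ and $a(x)=1$ elsewhere lies in $\cal M_0^{\mathrm{meas}}$ and satisfies $M_a\eta=c\eta$ identically, so $R_c$-invariance of $F$ is immediate.

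The main obstacle I anticipate is precisely this Lusin/measurable-extension bridge: converting the essential invariance under the continuous group into invariance under a measurable element that realizes $c\eta$ exactly. The delicate point is that the exceptional $\cal L^\theta$-null set on which $F\circ M_{a_n}\neq F$ may depend on $n$, and when $\{x_k\}$ is dense in $X$ one cannot choose a continuous $a$ with $a\equiv c$ on $\{x_k\}$ for $c\neq 1$, forcing either the Lusin-limit argument or the measurable extension. Once $R_c$-invariance of $F$ is secured for every $c>0$, the reduction to Theorem 7 described in the first paragraph completes the proof.
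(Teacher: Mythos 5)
Your overall strategy --- deduce ergodicity from the uniqueness theorem by showing that a bounded ${\cal M}_0$-invariant function $F$ is automatically invariant under the homotheties $R_c$, so that $F\cdot{\cal L}^{\theta}$ falls under the hypotheses of the uniqueness theorem --- is exactly the paper's first argument, which the paper states in a single sentence (``if the measure is not ergodic, decompose it into ergodic components, contradicting uniqueness'') and does not elaborate. You have correctly identified the point that sentence glosses over: the uniqueness theorem requires homogeneity of degree $\theta$, which an ${\cal M}_0$-invariant component does not automatically possess. The paper also gives a second, direct sketch that you do not use: transport the action through the isomorphism $J$ to $({\bar l}^1_+\times X^{\infty},\,CPD(\theta)\times\tilde m^{\infty})$, note that modulo the monotone reordering the action is fiberwise over $X^{\infty}$ and ergodic in the fibers, while the reordering itself acts ergodically on the Bernoulli factor $X^{\infty}$.

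However, your key step --- upgrading ${\cal M}_0$-invariance of $F$ to $R_c$-invariance --- has a genuine gap in both routes you offer. The ``cleaner'' measurable-extension route is vacuous: the measurable $a$ equal to $c$ on the countable atom set of a fixed $\eta$ and to $1$ elsewhere acts as the identity at ${\cal L}^{\theta}$-almost every point, because under $J$ the atoms are i.i.d.\ $\tilde m$ and almost surely avoid any fixed countable set; hence the a.e.\ identity $F\circ M_a=F$ is trivially true and says nothing about $F(c\eta)$ versus $F(\eta)$ --- the one point where $M_a\eta=c\eta$ lies precisely in the exceptional null set you cannot control. The Lusin route as written is circular for the same reason: your $a_n$ is manufactured from the atoms of $\eta$ itself, whereas the invariance $F\circ M_{a_n}=F$ holds only off a null set depending on $a_n$, so you are not entitled to evaluate it at the very $\eta$ used to define $a_n$. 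The repair is to use a sequence $a_n\in{\cal M}_0$ \emph{independent} of $\eta$: take $a_n\equiv c$ off a set $S_n$ with $m(S_n)\to 0$ and equal on $S_n$ to the compensating constant that makes $\int_X\ln a_n\,dm=0$. Since under $J$ the atoms are i.i.d.\ $\tilde m$ and independent of the weights, one has $\sum_{k:\,x_k\in S_n}c_k\to 0$ in measure on finite-measure pieces, hence $M_{a_n}\eta\to c\eta$ weakly for a.e.\ $\eta$ along a subsequence; your Lusin/measure-preservation/Fatou argument then closes without circularity, and the reduction to the uniqueness theorem goes through as you describe.
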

\begin{proof}
The uniqueness theorem for the measures ${\cal L}^{\theta}$
already contains the ergodicity; indeed, if the measure is not ergodic, then
we can decompose it into ergodic components,\footnote{Here we can do
this, because the action of the group $\cal
M$ on $l^1_+(X)$ is measurable (individual). It is important to
mention this fact, because the group is not locally compact.}
so there is no uniqueness. But we will give a sketch of a
direct proof, because it uses the structure of the action. Let us apply the
above-defined isomorphism $J$ between the space $(l^1_+(X),
{\cal L}^{\theta})$ and the product space  $({\bar l}^1
\times X^{\infty}, CPD(\theta)\times \tilde m^{\infty})$. Then the
action of the group ${\cal M}_0$ is given by the following formula:
$$M_a(\{c_k\},\{x_k\})=(\{a(x_{g(k)})c_{g(k)}\}, \{x_{g(k)}\})$$
where $g=g_{a,\{x_k\}}$ is the permutation of positive integers
that arranges the sequence $\{a(x_k)c_k\}$ by decreasing:
$a(x_{g(1)})\cdot c_{g(1)}\geq a(x_{g(2))}\cdot c_{g(2)}\geq\dots $. The
action of the group  ${\cal M}_0$ is not a skew product, but is similar to
it. Namely, if we drop the monotonic reordering of the sequence
$\{x_k\}$, then the action of the group ${\cal M}_0$ becomes
fiberwise (i.e., it does not change the sequence $\{x_k\}$) and ergodic in
the fibers. But the monotonic reordering of the sequence $\{x_k\}$
is an ergodic action on the Bernoulli product $X^{\infty}$; from this
we can conclude that the action of ${\cal M}_0$  is  ergodic.
\end{proof}

Some details about the measures ${\cal L}^{\theta}$ and their links
with $PD(\theta)$ and $CPD(\theta)$ can be found in \cite{V}.

\section{The absence of geometric approximations of the
Lebesgue measures}

\subsection{About approximations. The case of the Gaussian measure}
 A measure in an infinite-dimensional space
 that has a big group $G$ of symmetries, like the measure
 ${\cal L}^{\theta}$, can be represented
 as the weak limit of finite-dimensional invariant measures.
 Suppose we can find a dense subgroup of $G$ that is the union of an
 increasing sequence of finite-dimensional
 subgroups $G_n$ of the group $G$. Then
 it is natural to describe the measure as the weak limit of invariant measures
 on the orbits of $G_n$. This method of description of invariant
measures of ``big'' groups was called the ``ergodic method'' (see \cite{V1}),
because, in a sense, it uses various types of individual
ergodic theorems. For example, for the Gaussian measure in the
infinite-dimensional linear space, the group of symmetries is the
infinite-dimensional orthogonal
group, and, according to the remarkable
Maxwell--Poincar\'e lemma, this measure is the weak limit of  invariant measures
on the finite-dimensional spheres of increasing radii (see below; for
details, see also  \cite{V}). This also proves the Schoenberg theorem
on description of $O(\infty)$-invariant measures (see \cite{Ak}).

One of the definitions of the white noise, regarded as a generalized process
in the space $L^2(X,m)$, is in the framework of the theory of L\'evy
processes: the one-dimensional Gaussian measure is infinitely divisible,
and the corresponding generalized process is exactly the white noise.
This is parallel to the definitions of the gamma process and the ``$\sigma$-finite
L\'evy process'' from Section~2. We can also formulate this
definition using a ``random Gaussian measure.'' Another equivalent
(dual) definition uses the characteristic functional, or Fourier
transform. This notion makes sense not only for true $\sigma$-additive
Borel measures, but also for cylinder measures, or
generalized processes, which are defined only on the algebra of
cylinder sets of a linear topological space with measure, or
cylinder measure, $\mu$:
  $$\Psi_{\mu}(f)=\int_{-\infty}^{+\infty} \exp\{it\}d\mu_f(t);$$
  here $\mu_f$ is the distribution of the functional $f$ with
  respect to the (cylinder) measure $\mu$.
The characteristic functional is defined on the dual (conjugate)
space of linear functionals on the space where the measure is
defined. In the case of a Hilbert space, both spaces coincide. The
white noise is defined as a true measure in the Hilbert--Schmidt
extension of the space $L^2(X,m)$, but we do not need to consider this space.
The white noise $W$ is the generalized process in the space $L^2(X,m)$ (or
in an arbitrary Hilbert space) with characteristic functional
     $$\Psi_W(f)=\exp\{-\|f\|^2\}.$$

Let $\xi_n \equiv \xi$ be an arbitrary partition of the space $X$ into
$n$ pieces $A_1, \dots, A_n$ of equal measure $m(A_i)=1/n$, $i=1,
\dots, n$. Denote by $E_n$ the $n$-dimensional subspace of $L^2(X,m)$ that consists of
all functions that are constant on all $A_i$, $i=1, \dots, n$, with the
induced norm. Denote by $\rho_{\xi}$ the normalized Lebesgue measure on
the $(n-1)$-sphere $E_n\bigcap S_1$, where $S_1=\{f\in H: \|f\|=1\}$
is the unit sphere in $H$. Obviously, we can regard $\rho_{\xi}$ as
a cylinder measure.

\begin{theorem} The limit of the sequence of characteristic
functionals of the cylinder measures $\rho_{\xi_n}$ is
the characteristic functional of the white noise.
\end{theorem}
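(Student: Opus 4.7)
The plan is to compute the characteristic functional $\Psi_{\rho_{\xi_n}}(f)$ explicitly for each fixed test function $f \in L^2(X,m)$, reduce it to a one-dimensional integral by rotational invariance, and then invoke the classical Maxwell--Poincar\'e lemma. Because $\rho_{\xi_n}$ is supported on the subspace $E_n$, the linear functional $\phi \mapsto \langle f, \phi\rangle$ depends only on the orthogonal projection $P_n f$ of $f$ onto $E_n$; since the subspaces $E_n$ are increasing and dense in $L^2(X,m)$, one has $\|P_n f\| \to \|f\|$, which will let us replace $f$ by $P_n f$ freely.

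First, I would identify $E_n$ with $\mathbb{R}^n$ via the orthonormal basis $e_i = \sqrt{n}\,\chi_{A_i}$. Under this identification, the sphere $E_n \cap S_1$ corresponds to a round Euclidean sphere in $\mathbb{R}^n$ (of radius determined by the normalization), and the functional $\phi \mapsto \langle f, \phi\rangle$ becomes the inner product $\alpha \mapsto \langle b, \alpha\rangle$ with a coefficient vector $b \in \mathbb{R}^n$ whose Euclidean norm equals $\|P_n f\|_{L^2}$.

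Next, by rotational invariance of the uniform measure on a Euclidean sphere, $\langle b, \alpha\rangle$ has the same distribution as $\|b\|\cdot \alpha_1$, where $\alpha_1$ denotes the first coordinate of a uniform point on the sphere. Hence
\[
\Psi_{\rho_{\xi_n}}(f) \;=\; \int \exp\!\bigl(i\,\|P_n f\|\,\alpha_1\bigr)\, d\rho_{\xi_n}.
\]
The decisive step is the classical Maxwell--Poincar\'e lemma: with the correct scaling of the sphere's radius, the one-dimensional marginal distribution of $\alpha_1$ converges weakly to a centered Gaussian whose variance is exactly the one prescribed by the convention used to define $\Psi_W$. Combining this with $\|P_n f\| \to \|f\|$ and dominated convergence (to pass the limit through the bounded exponential) yields $\lim_n \Psi_{\rho_{\xi_n}}(f) = \Psi_W(f)$.

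The main obstacle is the bookkeeping of the radius. In the coefficient parameterization $\phi = \sum c_i \chi_{A_i}$, the unit $L^2$-sphere is the sphere of radius $\sqrt{n}$ in $\mathbb{R}^n$ (since $m(A_i) = 1/n$); it is precisely this growth of the radius that produces Gaussian marginals with a nonzero variance in the limit rather than a point mass at $0$. Once this normalization has been pinned down, the ``physical'' content of the argument---equivalence of microcanonical and canonical ensembles, in the terminology of the introduction---is exactly what Maxwell--Poincar\'e asserts, and the remainder of the proof is routine.
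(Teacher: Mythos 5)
Your overall route is the same as the paper's: reduce $\Psi_{\rho_{\xi_n}}(f)$ to a one--dimensional marginal by rotational invariance and then take the Maxwell--Poincar\'e limit. (The paper does not cite MP as a black box; it writes the marginal integral as $C_n\int_0^1(1-r^2)^{(n-3)/2}\cos(\|s\|\sqrt n\,r)\,dr$, identifies it with a Bessel function $J_{(n-3)/2}(\|s\|\sqrt n)$ times a Gamma factor, and expands --- which, as the paper's own remark notes, is exactly a proof of the MP lemma. So that substitution is harmless.)

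However, there is a genuine gap in your normalization, and it is precisely at the step you flag as ``the main obstacle.'' In your orthonormal parameterization $e_i=\sqrt n\,\chi_{A_i}$, the set $E_n\cap S_1$ is the \emph{unit} Euclidean sphere of $\mathbb R^n$, and $\langle f,\phi\rangle=\langle b,\alpha\rangle\overset{d}{=}\|P_nf\|\,\alpha_1$ with $\alpha_1$ the first coordinate of a uniform point on that unit sphere. Since $E[\alpha_1^2]=1/n$, this marginal degenerates to $\delta_0$ and your $\Psi_{\rho_{\xi_n}}(f)\to 1$, not $e^{-\|f\|^2/2}$. Your attempted rescue --- that in the coefficient coordinates $\phi=\sum c_i\chi_{A_i}$ the sphere has radius $\sqrt n$ --- does not help: in those coordinates the dual vector has entries $b_i=\int_{A_i}f\,dm$ with Euclidean norm $\|P_nf\|/\sqrt n$, so the growth of the radius is exactly cancelled and the product is again $\|P_nf\|\,\alpha_1^{(\mathrm{unit})}\to 0$. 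A nondegenerate Gaussian limit with variance $\|f\|^2$ requires the $L^2$-radius of the spheres themselves to grow like $\sqrt n$; and indeed the paper's displayed computation is over $O_n$, the sphere of radius $\sqrt n$ paired with a \emph{fixed} vector $s$, not over $E_n\cap S_1$ paired with a fixed $f\in L^2$. (The paper's statement and its proof are not consistent on this point; but your write-up asserts that the bookkeeping closes for $E_n\cap S_1$, and it does not.) To repair the argument you must either replace $S_1$ by the sphere of radius $\sqrt n$ in $H$, or reinterpret the cylinder coordinates as the normalized coefficients $\langle\phi,e_i\rangle$ paired with fixed finite vectors $s$, which is what the displayed Bessel computation actually evaluates. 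The remaining ingredients you invoke ($\|P_nf\|\to\|f\|$, which needs the partitions to generate the Borel $\sigma$-field, and bounded convergence) are fine.
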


We will not go into the details, but a consequence of this theorem is
as follows.

\begin{corollary} The measure generated by the white noise (which is
 defined in the Hilbert--Schmidt extension of $L^2(X,m)$) is the weak
 limit of the finite-dimensional measures $\rho_{\xi_n}$.
 \end{corollary}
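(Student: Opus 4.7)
The plan is to deduce the weak convergence of $\rho_{\xi_n}$ to the white noise measure from the pointwise convergence of characteristic functionals established in the preceding theorem, by means of a Minlos--Sazonov argument carried out in the Hilbert--Schmidt extension $\widetilde H$ of $H=L^2(X,m)$.

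First I would use the preceding theorem to obtain
$$\Psi_{\rho_{\xi_n}}(f)\longrightarrow \Psi_W(f)=e^{-\|f\|^2}$$
for every $f\in H$. The theorem gives this limit immediately for functions $f$ that are $\xi_{n_0}$-measurable for some fixed $n_0$ (the genuine cylinder functionals); the uniform bound $|\Psi_{\rho_{\xi_n}}|\le 1$ together with the density of $\bigcup_{n_0}E_{n_0}$ in $H$ and the continuity of the limit functional extends the convergence to all $f\in H$.

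Second, I would realize each $\rho_{\xi_n}$ as a probability measure on $\widetilde H$ via the canonical embedding $H\hookrightarrow\widetilde H$. Since $\rho_{\xi_n}$ is supported on the unit sphere of $E_n\subset H$, and since the inclusion $H\hookrightarrow\widetilde H$ is Hilbert--Schmidt (hence sends bounded sets to precompact ones), one obtains uniform control on the $\widetilde H$-tails of the $\rho_{\xi_n}$. Concretely, for any $\varepsilon>0$ one may choose a Hilbert--Schmidt operator $S$ defining the topology of $\widetilde H$ and then a compact set $K_\varepsilon\subset\widetilde H$ such that $\rho_{\xi_n}(\widetilde H\setminus K_\varepsilon)<\varepsilon$ for all $n$; this is exactly the tightness condition.

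Third, Prokhorov's theorem now provides weakly convergent subsequences. For any subsequential weak limit $\mu$ one has, by bounded convergence applied to $e^{i\langle f,\cdot\rangle}$, that $\Psi_\mu(f)=e^{-\|f\|^2}=\Psi_W(f)$ for all $f\in H$. Uniqueness of a $\sigma$-additive measure on $\widetilde H$ with prescribed characteristic functional then forces $\mu=W$, and since every subsequential limit is $W$ the full sequence $\rho_{\xi_n}$ converges weakly to $W$.

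The main obstacle is the tightness step: one must choose the Hilbert--Schmidt extension $\widetilde H$ so that its topology is compatible both with the finite-dimensional approximations (to make the embeddings of $\rho_{\xi_n}$ into $\widetilde H$ honest probability measures) and with the continuity of the Gaussian functional $\exp(-\|f\|^2)$ guaranteed by Minlos' theorem. Once the correct extension is fixed, the uniform tightness is essentially built into the Hilbert--Schmidt nature of the embedding, and the rest of the argument is the standard Lévy--Prokhorov continuity theorem for nuclear spaces.
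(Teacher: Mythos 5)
Your proposal and the paper's proof are almost complementary. The paper's entire argument is the explicit Maxwell--Poincar\'e computation: the characteristic functional of the normalized measure on the sphere $O_n$ of radius $\sqrt n$ is expanded via Bessel-function and gamma-function asymptotics and shown to converge to $\exp(-\|s\|^2/2)$; the passage from this convergence of characteristic functionals to weak convergence of the measures in the Hilbert--Schmidt extension is explicitly waved away (``we will not go into the details''). You take the computation for granted (legitimately, since it is the content of the preceding theorem) and try to supply exactly the omitted functional-analytic bridge. That is a reasonable division of labor, but your bridge has a genuine gap at the tightness step, and the gap is not cosmetic: as stated, your argument would prove something false.

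Concretely: you claim that since each $\rho_{\xi_n}$ lives on the unit sphere of $E_n\subset H$ and the embedding $H\hookrightarrow\widetilde H$ is Hilbert--Schmidt (hence compact), all the $\rho_{\xi_n}$ sit inside one fixed compact set $K$, the image of the closed unit ball $B$ of $H$. But $B$ is closed (indeed compact) in $\widetilde H$, so by the portmanteau theorem any weak limit of the $\rho_{\xi_n}$ would assign $B$ full measure --- whereas the white noise measure gives measure zero to all of $H$ (Cameron--Martin). So either the limit is not the white noise or the measures are not uniformly bounded in $H$. The resolution is a normalization issue that the paper itself glosses over: if the $\rho_{\xi_n}$ are literally the uniform measures on $E_n\cap S_1$ and one pairs with $f\in H$ via the $L^2$ inner product, then $\langle f,\cdot\rangle$ has variance $\|P_{E_n}f\|^2/n\to 0$ and the limit is $\delta_0$, not the Gaussian. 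The Gaussian limit of the theorem uses the coordinates $x_k$ (the values of the step function), i.e.\ spheres of $H$-radius $\sqrt n$, and then the supports are unbounded in $H$ and the compact-embedding shortcut is unavailable. With that normalization one instead has the uniform moment bound $\int\langle v,x\rangle^2\,d\rho_{\xi_n}=\|P_{E_n}v\|^2\le\|v\|^2$, which gives both the equi-Lipschitz estimate needed to extend pointwise convergence from cylinder functionals to all of $H$ (your first step tacitly needs equicontinuity, not just $|\Psi|\le1$) and a uniform bound on $\int\|x\|^2\,d\rho_{\xi_n}$ in a Hilbert--Schmidt extension; tightness in $\widetilde H$ then requires one further compact embedding (or the Sazonov equicontinuity criterion), since balls of an intermediate extension are compact only in a larger one. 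Once tightness is repaired in this way, your Prokhorov-plus-uniqueness conclusion goes through.
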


\begin{proof}We have
$$\lim_n \int_{O_n}\exp\{i<s,x>\}d\omega_n(x)=\lim_n
C_n\bigg[\int_0^1(1-r^2)^{\frac{n-3}{2}}\cos(\|s\|\sqrt n
r)dr\bigg]$$
$$=\lim_n \Gamma\bigg(\frac{n}{2}\bigg)\bigg(\frac{2}{\|s\|\sqrt n}\bigg)^{\frac{n-3}{2}}
\cdot J_{\frac{n-3}{2}}(\|s\|\sqrt n)=\lim_n
\bigg[-\frac{\|s\|^2}{2}+\frac{\|s\|^4}{2\cdot 4(1+2/n)}-\dots \bigg]
$$
$$=\exp(-\frac{\|s\|^2}{2}),$$
where $s \in {\Bbb R}^n$; $O_n$ is the $(n-1)$-sphere of radius
 $\sqrt n$; $x \in O_n$; $\omega$ is the normalized Lebesgue measure on
 the sphere $O_n$; and $J_m(\cdot)$ is the Bessel function. Here we have
 used the
 standard asymptotics of the Bessel function $J_m$ and the gamma function, see
\cite{GR,Ak,V}.
\end{proof}

\medskip\noindent
{\bf Remarks. 1.}
The calculations above are equivalent to the
proof of the Maxwell--Poincar\'e (MP) lemma, which claims that the
weak limit of the normalized Lebesgue measures on the spheres $S^n
\subset \mathbb R^{n+1}\subset\mathbb R^{\infty}$ of radius $\sqrt
n$ is the standard Gaussian measure in the space $\mathbb
R^{\infty}$. See \cite{TV} for details and history. But the
conclusion of our theorem involves another space rather than
$\mathbb R^{\infty}$, and approximation in our case is quite
different.

\smallskip
{\bf2.} Theorems that claim that an infinite-dimensional measure
with an infinite-dimensional group of symmetries is the weak limit
of measures on the orbits of finite-dimensional groups can be
regarded as claims about the equivalence of the small and grand
canonical ensembles in statistical physics. In the case  of the
Gaussian measure, this analogy can be pursued further.

\subsection{The case of the infinite-dimensional Lebesgue measure}

 Is it possible to approximate our measures ${\cal L}^{\theta}$ in
the same way using the orbits of the finite-dimensional Cartan
subgroups? The similarity between the Gaussian and Lebesgue cases is
obvious: both measures are obtained by the same construction of
L\'evy processes, the first one being generated by the semigroup of Gaussian
measures on the line with density
$G(x)=\frac{1}{\sqrt{2\pi}}e^{-\frac{x^2}{2}}$, and the second one,
by the semigroup of $\sigma$-finite measures with densities
$L_{\theta}(x)=\frac{x^{\theta-1}}{\Gamma(\theta)}$ on the half-line.
Nevertheless,  the difference between the
infinite-dimensional Gaussian and Lebesgue measures is also big:
the group of symmetries ${\cal M}_0$ is abelian and  does
not contain a dense subgroup that is the limit of compact groups,
as in the case of the orthogonal group $O(\infty)$. But it
 has many finite-dimensional noncompact subgroups.
So we can consider the
orbits of those subgroups that are smooth noncompact manifolds in
the cone $\l^1_+(X)$.

In comparison with the Gaussian case, instead of the $n$-dimensional spheres
$S^n_{r_n}$ of radius $r_n=c\sqrt n$ used in the Maxwell--Poincar\'e lemma,
we must consider a hypersurface, the ``hypersphere'' in ${\Bbb R}^n$ defined as
$$M_{n,r} = \bigg\{(y_1, \dots, y_n): \prod_{k=1}^n y_k =r^n,\, y_k>0,\, k=1, \dots, n \bigg\}.
$$
The number $r=r(n)$ will be called the \emph{radius} of the
 hypersphere; it depends on $n$.
This hypersphere is a homogeneous space of the positive part $SD_+(n)$ of the Cartan subgroup
of $SL(n,\Bbb R)$. Now let $\mu_{n,r}$ be the invariant
$\sigma$-finite measure on $M_{n,r}$ with some scaling (= a choice of a
set of measure $1$).

Now we formulate a precise statement which shows that in the Lebesgue case
there is no
approximation of this type. A detailed version
can be found in \cite{VV}.

We want to find the asymptotic properties of the invariant measure
on the positive Cartan subgroup $SDiag_+$ (consisting of positive diagonal
matrices) of the group $SL(N,\mathbb R)$ as $N$ tends to infinity. More
exactly, we want to find the {\it Laplace transform $D_n(\cdot)$ of the
invariant $\sigma$-finite measure on the hypersphere} $M_{n,r}$:
$$D_n(f)=\int _{M_{n,r}}
\exp\{-\sum_{k=1}^n y_k \cdot f_k\}dm_n(y),$$
where $f=(f_1,\dots, f_n)$ are the dual variables. We introduce the function
$F_n$ on the positive half-line ${\Bbb R}_+$ which is sometimes
called the Mellin--Barnes function (see, e.g.,
\cite{Par}):
 $$F_n(\lambda)=\int_{H_n}\exp \{-\lambda \sum_{k=1}^n \exp x_k\}dx,$$
 where
 $H_n=\{(x_1,\dots, x_n)\in {\Bbb R}^n:\sum_{k=1}^n x_k=0\}$.

After some transformations we obtain the formula
$$D_n(f)=F(\rho_n(f)r_n),$$
where
 $\rho_n(f)=(\prod_{k=1}^n
f_k)^{\frac{1}{n}}$.

The function $F_n$ is the inverse Mellin transform of the $n$th power
$\Gamma(s)^n$ of the gamma function  (up to the multiplier $n$, which we can omit).
 Thus the functions $F_n(\lambda)$ and
 $\Gamma(s)^n$ represent a ``Mellin pair'' \cite{Par}.
 For example, for $n=1$ the Mellin pair is $\exp s$ and $\Gamma (s)$.

Thus our problem reduces to the calculation, by the saddle point method,
of the integral
$$F_n(\lambda)=\frac{1}{2\pi in}\int_{\gamma-i\infty}^{\gamma+i\infty} [\Gamma(s)]^n \lambda^{-ns}ds,$$
or, in the ``real'' form (put $s=\gamma + it$),
 $$F_n(\lambda)=\frac{1}{2\pi in}\int_{-\infty}^{+\infty} [\Gamma(\gamma+it)]^n \lambda^{-n(\gamma+it)}dt.$$

 We want  to find an appropriate limit of this sequence as $n \to
 \infty$. It happens that a suitable saddle point is $(\gamma,0)
 \in \Bbb C$ where $\gamma$ is the root of the following equation:
  $$\ln \lambda=\frac{\Gamma'(\gamma)}{\Gamma(\gamma)},$$
 or
  $$\lambda=\exp\bigg\{\frac{\Gamma'(\gamma)}{\Gamma(\gamma)}\bigg\}.$$
The relation between these parameters is expressed by
the following graph:

\includegraphics[width=10cm,height=14cm,angle=270]{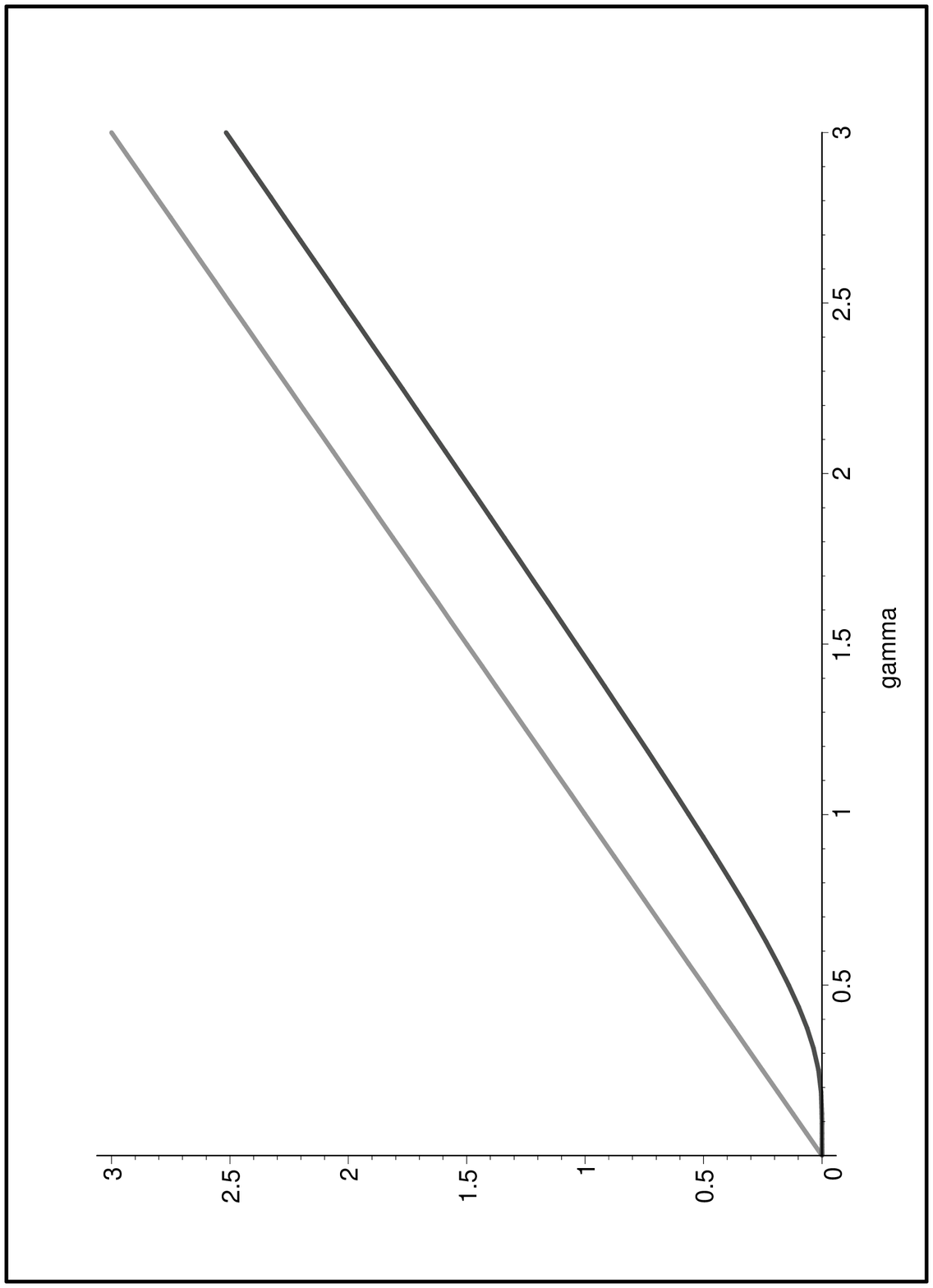}

\bigskip

Consider the function
 $$L(\lambda)\equiv \lim_n\frac{\ln F_n(\lambda)}{n}.$$

The calculation presented in detail in
\cite{VV} yields the following result:

 \begin{theorem}
 The function $L$ satisfies the following equation:
 $$L(\lambda)\equiv \lim_n\frac{\ln F_n(\lambda)}{n}=
 \frac{\Gamma(\gamma(\lambda))}{\lambda^{\gamma}},$$ where $\gamma$
 and $\lambda$  satisfy the equation
 $$\lambda=\exp\bigg\{\frac{\Gamma'(\gamma)}{\Gamma(\gamma)}\bigg\}.$$
 \end{theorem}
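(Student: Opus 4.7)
The natural approach is a saddle point (Laplace) analysis of the Mellin--Barnes integral
$$F_n(\lambda)=\frac{1}{2\pi in}\int_{\gamma-i\infty}^{\gamma+i\infty}[\Gamma(s)]^n\lambda^{-ns}\,ds$$
displayed just before the theorem. Writing the integrand as $\exp\{n\Phi(s)\}$ with
$$\Phi(s)=\ln\Gamma(s)-s\ln\lambda,$$
the problem is a textbook application of the method of steepest descent with a large parameter $n$. So the plan is: (i) locate the saddle point, (ii) deform the contour to a vertical line through it, (iii) extract the leading exponential, and (iv) divide by $n$ and pass to the limit.

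For step (i), setting $\Phi'(s)=0$ gives $\psi(s):=\Gamma'(s)/\Gamma(s)=\ln\lambda$. Since $\psi$ is a strictly increasing real-analytic bijection from $(0,\infty)$ onto $(-\infty,+\infty)$, for each $\lambda>0$ there is a unique real root $s=\gamma(\lambda)>0$, which is precisely the $\gamma$ featured in the theorem. One checks $\Phi''(\gamma)=\psi'(\gamma)>0$ (trigamma is positive on $(0,\infty)$); along the vertical line $s=\gamma+it$ this means $\Re\Phi(\gamma+it)$ has a strict maximum at $t=0$, so the vertical contour is in fact a steepest descent contour for $\Re\Phi$. For step (ii), we shift the abscissa in the Mellin--Barnes integral from its original value to $\Re s=\gamma$; this is permitted because $\Gamma(s)$ has no poles in the right half-plane $\Re s>0$ and because $|\Gamma(\sigma+it)|$ decays super-exponentially as $|t|\to\infty$ (Stirling), uniformly on vertical strips, so the horizontal pieces of the deformation vanish.

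For step (iii), standard Laplace-type asymptotics give
$$F_n(\lambda)=\frac{1}{2\pi in}\int_{-\infty}^{\infty}\exp\{n\Phi(\gamma+it)\}\,i\,dt\sim \frac{1}{n}\cdot\frac{e^{n\Phi(\gamma)}}{\sqrt{2\pi n\,\psi'(\gamma)}}\qquad(n\to\infty),$$
once one verifies the quadratic approximation $\Phi(\gamma+it)=\Phi(\gamma)-\tfrac12\psi'(\gamma)t^{2}+O(t^{3})$ near $t=0$ and bounds the tail contribution $|t|\ge \delta$ by $C e^{n(\Phi(\gamma)-c(\delta))}$ using Stirling-type bounds on $|\ln\Gamma(\gamma+it)|$ for large $|t|$. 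Taking logarithms and dividing by $n$, all polynomial and square-root prefactors contribute $O((\log n)/n)\to 0$, and step (iv) yields
$$L(\lambda)=\lim_{n\to\infty}\frac{\ln F_n(\lambda)}{n}=\Phi(\gamma)=\ln\Gamma(\gamma)-\gamma\ln\lambda=\ln\frac{\Gamma(\gamma(\lambda))}{\lambda^{\gamma}},$$
which is the stated formula (in its logarithmic form).

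The routine parts are the quadratic expansion of $\Phi$ and the use of $\psi,\psi'$; the one delicate point, and the main technical obstacle, is the tail estimate controlling $\int_{|t|\ge\delta}|\Gamma(\gamma+it)|^{n}\,dt$ uniformly in $n$. This requires the sharp Stirling asymptotic $|\Gamma(\gamma+it)|\sim\sqrt{2\pi}\,|t|^{\gamma-1/2}e^{-\pi|t|/2}$ as $|t|\to\infty$, which forces $|\Gamma(\gamma+it)|^{n}$ to be exponentially smaller than $e^{n\Phi(\gamma)}$ outside any fixed neighborhood of the saddle; this absorbs the prefactor $1/n$ and secures the limit. The full book-keeping is carried out in \cite{VV}; the proposal above shows why no new ideas beyond classical steepest descent for Mellin--Barnes integrals are needed.
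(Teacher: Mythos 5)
Your proposal follows exactly the route the paper indicates: a steepest-descent evaluation of the Mellin--Barnes integral at the real saddle $\gamma$ solving $\Gamma'(\gamma)/\Gamma(\gamma)=\ln\lambda$; the paper itself only sets up this saddle-point problem and defers the actual calculation to \cite{VV}, so your argument supplies precisely the intended proof. One substantive remark: your computation correctly gives $L(\lambda)=\Phi(\gamma)=\ln\bigl(\Gamma(\gamma)/\lambda^{\gamma}\bigr)$, whereas the theorem as printed omits the logarithm on the right-hand side; since $L$ is defined as $\lim_n n^{-1}\ln F_n(\lambda)$, your logarithmic form is the internally consistent one (equivalently, the printed expression $\Gamma(\gamma)/\lambda^{\gamma}$ is $\lim_n F_n(\lambda)^{1/n}$), and this reading also agrees with the paper's subsequent comparison of the graph of $L$ with that of $-\ln$.
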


This answer proves the existence of the remarkable function $L$ and
means that there is no convergence of the functions $F_n(\lambda)$
as $n\to \infty $ and no convergence of the Laplace transforms $D_n$
({\it for any choice of the radius} $r_n$) of the invariant measures
on the hyperspheres $M_{n,r(n)}$ to a finite limit. Thus $F_n$ does
not tend to the value of the Laplace transform of the measure ${\cal
L}^{\theta}$  at the constant function identically equal to
$\lambda$, which is finite and equal to
$\frac{1}{\lambda^{\theta}}$. Consequently, there is no weak
convergence of the measures $m_n$ (in the sense of convergence of
Laplace transforms). So we cannot represent the measure ${\cal
L}^{\theta}$ as the weak limit of finite-dimensional
$SDiag_+$-invariant measures.  In turn, we can say that there is no
equivalence of the grand and small canonical ensembles for the
Cartan subgroups. It seems that the ``tail'' of the hypersphere
$M_{n,r}$ carries too much part of the measure in order to preserve
the finiteness of the limit and violate the equivalence of the
ensembles.

The function $L$ is a very interesting object; author does not know
if it has been studied in the literature. It looks like the free
energy in statistical mechanics and plays the role of the generator
of the family of invariant measures on the hyperspheres. The graph
of $L$ shows that it is similar to the graph of the function $-\ln$,
which is the true generator of the one-parameter semigroup of
one-dimensional Lebesgue measures ($L_{\theta}$):

\bigskip

\includegraphics[width=14cm,height=10cm,angle=0]{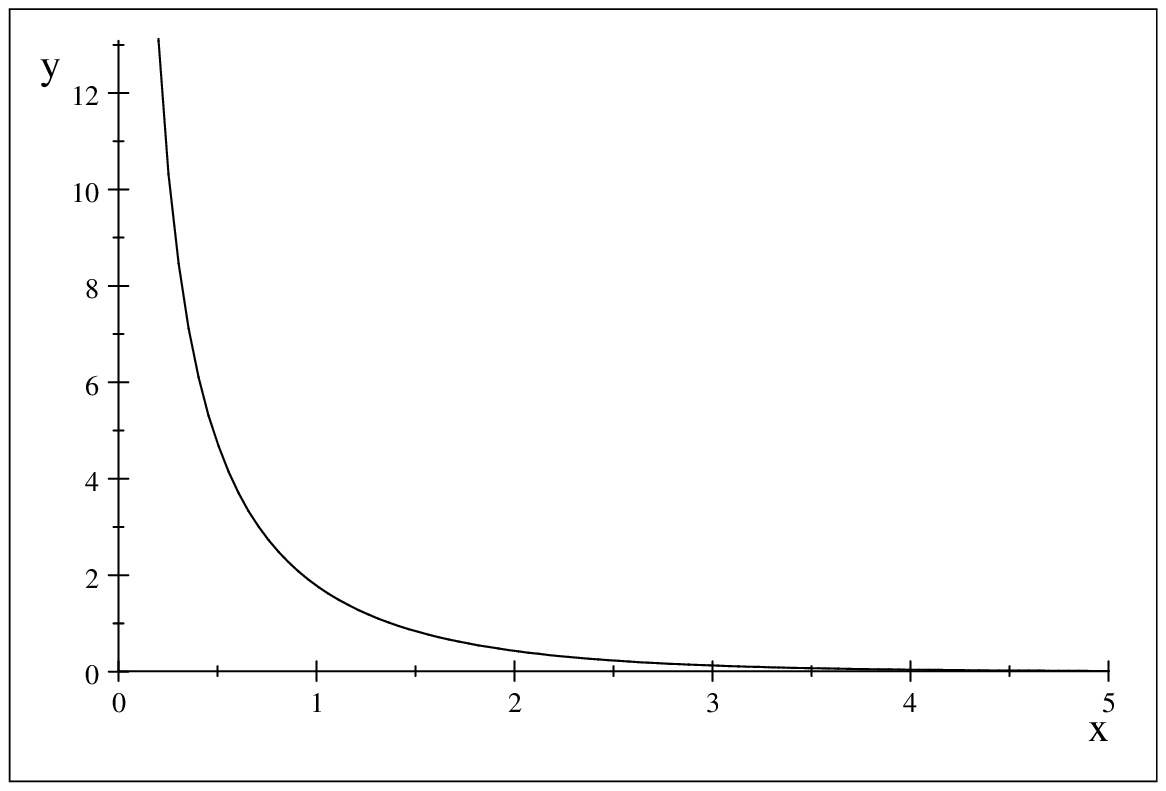}

\bigskip

\pagebreak
\noindent
{\bf Concluding remarks.}

\smallskip
{\bf1.} It is very easy to construct the family of Lebesgue measures on
the whole space $l^1(X)$ instead of the measures ${\cal L}^{\theta}$ on
the cone $l^1_+(X)$: we need only to replace the
gamma subordinator in the definition of ${\cal L}^{\theta}$ with the whole
(nonpositive) gamma process. The measures $L_{\theta}$ on the
half-line must be replaced in all definitions with the measure on the
whole line that is the symmetric extension of
$L_{\theta}$. Apparently, the results of the
theory of Poisson--Dirichlet measures, including those of \cite{VSh1}, have never
been generalized to this case of nonpositive series.

\smallskip
{\bf2.} Another aspect of the theory we discussed in this paper concerns
generalization of the Lebesgue measure to the set of {\it
two-sided series}, which will be considered elsewhere.

\bigskip\noindent{\bf Acknowledgements.}
This work was partially supported by the grants RFBR 08-01-00379 and
NSh-2460.2008.1. The author is grateful to the Erwin Schrodinger
Institute (Vienna) for the possibilities to finish this paper during
the semester ``Statistical Physics and Asymptotic Combinatorics'';
and to Natalia Tsilevich for her great help with  editing this
manuscript.

\end{document}